\newtheorem{thm}{Theorem}[section]
\newcommand{\al}{\alpha}
\newcommand{\p}{\partial}
\newcommand{\sig}{\sigma}
\newcommand{\Gam}{\Gamma}
\newcommand{\lam}{\lambda}
\newcommand{\D}{\Delta}
\newcommand{\ep}{\epsilon}
\newcommand{\del}{\delta}
\newcommand{\vp}{\varphi}
\newcommand{\tes}{\hat{\theta}_{n}}
\newcommand{\bes}{\hat{\beta}_{n}}
\newcommand{\ses}{\hat{\sig}_{n}}
\newcommand{\tz}{\theta_{0}}
\newcommand{\res}{\hat{\rho}_{n}}
\newcommand{\E}{\mathbb{E}}
\newcommand{\pr}{\mathbb{P}}
\newcommand{\cil}{\Rightarrow}
\newcommand{\cip}{\xrightarrow{p}}
\newcommand{\mbbg}{\mathbb{G}}
\newcommand{\mbbn}{\mathbb{N}}
\newcommand{\mbbr}{\mathbb{R}}
\newcommand{\mbbrp}{\mathbb{R}_{+}}
\newcommand{\mbbs}{\mathbb{S}}
\newcommand{\mcc}{\mathcal{C}}
\newcommand{\mcl}{\mathcal{L}}
\newcommand{\mci}{\mathcal{I}}
\def\ds#1{\displaystyle{#1}}
\def\nn{\nonumber}
\def\tcr#1{\textcolor{red}{#1}}
\def\var{{\rm var}}
\def\sumj{\sum_{j=1}^{n}}
\def\lp{L\'evy process}
\title{Efficient estimation of stable L\'{e}vy process with symmetric jumps
\thanks{This work was partially supported by JSPS KAKENHI Grant Number JP26400204 and JST CREST Grant Number JPMJCR14D7, Japan (HM).
}
}
\author[1]{Alexandre Brouste\thanks{Avenue Olivier Messiaen, 72085 Le Mans Cedex 9, France. Email: {\tt Alexandre.Brouste@univ-lemans.fr}}}
\author[2]{Hiroki Masuda\thanks{744 Motooka Nishi-ku Fukuoka, 819-0395, Japan. Email: {\tt hiroki@math.kyushu-u.ac.jp} (Corresponding author)}}
\affil[1]{Laboratoire Manceau de Math\'{e}matiques, Le Mans Universit\'{e}, France}
\affil[2]{Faculty of Mathematics, Kyushu University, Japan}
\begin{document}

\maketitle

\begin{abstract}
Efficient estimation of a non-Gaussian stable L\'{e}vy process with drift and symmetric jumps observed at high frequency is considered. For this statistical experiment, the local asymptotic normality of the likelihood is proved with a non-singular Fisher information matrix through the use of a non-diagonal norming matrix. The asymptotic normality and efficiency of a sequence of roots of the associated likelihood equation are shown as well. Moreover, we show that a simple preliminary method of moments can be used as an initial estimator of a scoring procedure, thereby conveniently enabling us to bypass numerically demanding likelihood optimization. Our simulation results show that the one-step estimator can exhibit quite similar finite-sample performance as the maximum likelihood estimator.
\end{abstract}

\section{Introduction}


Let $(X_{t})_{t\ge 0}$ be a $\beta$-stable {\lp} with a drift $\mu\in \mbbr$ and symmetric jumps defined by
\begin{equation}
X_{t} = \mu t + \sig J_{t}, \quad t \geq 0,
\nonumber
\end{equation}
where $(J_t)_{t\ge 0}$ denotes the standard symmetric $\beta$-stable {\lp} characterized by its characteristic function
\begin{equation}
\E(e^{iu J_1}) = e^{- |u|^\beta}, \qquad u\in\mbbr.
\nonumber
\end{equation}
We refer to \cite{Sat99}, \cite{Zol86} for a comprehensive account of the stable distribution.
Throughout, we assume that the process $(X_{t})_{t\ge 0}$ is observed on regularly spaced time points $t_{j}^{n}=jh_{n}$ for $j=0,1,\dots,n$ in the high-frequency setting with sampling stepsize $h_{n}\to 0$ as $n \to \infty$ and with the terminal sampling time $nh_n$ satisfying 
\begin{equation}
\liminf_{n}nh_n >0.
\label{hm:liminf.p}
\end{equation}
The asymptotically efficient estimation of the three-dimensional unknown parameter
\begin{equation}
\theta := (\beta,\sig,\mu) \in (0,2)\times(0,\infty)\times\mbbr
\nonumber
\end{equation}
is considered in this paper.

As was shown in \cite{AitJac08} and \cite{Mas15LM}, the joint estimation of the scale $\sigma$ and stable (or self-similarity) index $\beta$ leads to a singular Fisher information matrix as long as a diagonal-matrix norming rate is used.
Consequently, the conventional convolution and minimax theorems are not of direct use, and an asymptotically minimal covariance matrix of sequences of estimators  cannot be deduced in this statistical experiment.
Any proper notion of efficiency for this experiment has not been detailed as yet in the literature, despite the theoretical and practical importance of the high-frequency data analysis and the fact that the stable {\lp es} constitute a fundamental class of non-Gaussian {\lp es}.

The asymptotic degeneracy is an intrinsic feature coming from the self-similarity of the underlying model. Recently, in the context of the observation of fractional Brownian motion observed at high-frequency, the singularity issue has been untied in \cite{BroFuk16} by using a non-diagonal norming matrix, and a classical local asymptotic normality (LAN) property of the likelihoods has been obtained with a non-degenerate Fisher information. The associated H\'{a}jek-Le Cam asymptotic minimax theorem is derived as a corollary; we refer to \cite{IbrHas81} for a detailed account of the LAN property and its consequences. It is worth mentioning that a non-diagonal norming has also been considered in proving the LAN property for the drift and scale parameters of a possibly skewed locally stable {\lp} from high-frequency data, when the activity index is assumed to be known (see \cite[Remark 2.2]{IvaKulMas15} for some related remarks),
and in the joint estimation of the index and the scale parameters of a stable subordinator when observing the
biggest $n$ jump sizes with their jump instants precisely (see \cite{HopJac94}).

Contrary to the large sample asymptotics with fixed sampling stepsize,
 the LAN property of the likelihoods cannot be deduced from Le Cam's second lemma in the high-frequency setting. Consequently, this study enlarges the domain of application of the LAN theory already proved for i.i.d. classical setting \cite{Haj72}, \cite{LeC72}, ergodic Markov chains \cite{Rou72}, ergodic diffusions \cite{Gob02}, diffusions under high-frequency observations \cite{Gob01},  diffusions with observational noise \cite{GloJac01-1}, \cite{GloJac01-2}, \cite{Rei11}, several L\'evy process or L\'{e}vy driven models \cite{CleGlo15}, \cite{KawMas13}, \cite{Mas15LM} and fractional Gaussian noise \cite{BroFuk16}, \cite{CohGamLacLou13}.

\medskip

The purpose of this paper is twofold. Firstly, the LAN property of the likelihoods for the aforementioned high-frequency statistical experiment is proved with a non-diagonal norming rate and a non-singular Fisher information matrix.
In particular, a H\'{a}jek-Le Cam lower bound for any estimator can be derived so that efficiency can be described. A sequence of maximum likelihood estimators (MLE) is shown to be asymptotically normal and asymptotically efficient.
In the proofs, the analytic properties of the probability density function of $J_1$ is fully used.

Although the exact likelihood function can be numerically computed, the likelihood function considered here involves the local scaling by the factor $h_n^{-1/\beta}$,
making numerical evaluation of a maximum-likelihood estimate more time-consuming compared with the i.i.d. model setup (see \eqref{hm:log-lf} below). In this respect, it is beneficial to construct a computationally easy-to-use estimator which is asymptotically equivalent to the MLE.
Therefore, as a second purpose of the present study, we exemplify such an asymptotically optimal estimator through a preliminary method of moments combined with the scoring procedure.

\medskip

The LAN property of the likelihoods and the asymptotic normality of the sequence of MLE are described in Section \ref{hm:sec_lfa}, followed by asymptotic efficiency of the MLE.
Construction of an asymptotically optimal simple estimator is detailed in Section \ref{hm:sec_optest} with some numerical experiments.

\section{Likelihood asymptotics}\label{hm:sec_lfa}


We write $\to_{u}$ for the ordinary uniform convergence (for non-random quantities) with respect to $\theta$ over any compact set contained in $(0,2)\times(0,\infty)\times\mbbr$.
For any continuous random functions $\xi_{0}(\theta)$ and $\xi_{n}(\theta)$, $n\ge 1$, we introduce the following two modes of uniform convergence:
we write $\xi_{n}(\theta)\cil_{u}\xi_{0}(\theta)$ if
\begin{equation}
|P^{\xi_{n}(\theta)}f - P^{\xi_{0}(\theta)}f |\to_{u}0
\nonumber
\end{equation}
as $n\to\infty$ for every bounded uniformly continuous function $f$, where $P^{\zeta}$ denotes the distribution of $\zeta$;
also, letting $\pr_{\theta}$ denote the distribution of $(\xi_{0}(\theta),\xi_{1}(\theta),\xi_{2}(\theta),\dots)$
we write $\xi_{n}(\theta)\cip_{u}\xi_{0}(\theta)$ if for every $\ep>0$,
\begin{equation}
\pr_{\theta}\{ |\xi_{n}(\theta)-\xi_{0}(\theta)|>\ep\}\to_{u} 0
\nonumber
\end{equation}
as $n\to\infty$.
We will omit the subscript ``$u$'' to denote the convergences without the uniformity. 
We write $a_{n}\lesssim b_{n}$ when there exists a universal multiplicative constant $C$ such that $a_{n}\le C b_{n}$ for every $n$ large enough.
For positive functions $a_{n}(\theta)$ and $b_{n}(\theta)$, we denote $a_{n}(\theta) \lesssim_{u} b_{n}(\theta)$ if $\sup_{\theta\in K}|a_{n}(\theta)/b_{n}(\theta)| \lesssim 1$ for any compact $K\subset(0,2)\times(0,\infty)\times\mbbr$.

\medskip

Let $\D_{j}X:=X_{t^n_{j}}-X_{t^n_{j-1}}$, the $j$th increments of the process $(X_{t})_{t\ge 0}$.
Since $(J_t)_{t\ge0}$ has independent and stationary increments, the log-likelihood function based on observing  $(X_{t^n_j})_{j=0}^{n}$ is given by
\begin{equation}
\ell_n(\theta) :=  \sumj \bigg( \frac{1}{\beta}\log(1/h_n) -  \log \sigma + \log \phi_\beta \left( \ep^n_{j}(\theta)\right) \bigg),
\label{hm:log-lf}
\end{equation}
where $\phi_{\beta}$ denotes the density of $\mathcal{L}(J_{1})$ and
\begin{equation}
\ep^n_{j}(\theta) := \frac{\D_{j}X - h_n\mu}{h_n^{1/\beta}\sig}.
\nn
\end{equation}
The distribution of $(X_t)_{t\ge 0}$ associated with $\theta\in(0,2)\times(0,\infty)\times\mbbr$ is denoted by $\pr_{\theta}$ and the associated expectation operator by $\E_{\theta}$ .
For each $n$ and $\theta$, $\ep^n_{j}(\theta)$'s are $\mcl(J_{1})$-i.i.d.  $\beta$-stable  random variables under $\pr_{\theta}$.

Let  $\vp_{n}:\,(0,2)\times(0,\infty)\times\mbbr \to \mbbr^{3}\otimes\mbbr^{3}$ be a continuous mapping of the block-diagonal form
\begin{equation}
\vp_{n}(\theta) := \frac{1}{\sqrt{n}}
\begin{pmatrix}
\vp_{11,n}(\theta) & \vp_{12,n}(\theta) & 0 \\
\vp_{21,n}(\theta) & \vp_{22,n}(\theta) & 0 \\
0 & 0 &  h_n^{-(1-1/\beta)}\\
\end{pmatrix}.
\label{hm:vp_def}
\end{equation}
This matrix will play the role of the proper rate matrix of the MLE.
As with \cite{BroFuk16}, concerning the upper-left part of $\vp_{n}$ we assume the following conditions:
\begin{equation}
\left\{
\begin{array}{l}
\vp_{11,n}(\theta) \to_{u} \overline{\vp}_{11}(\theta), \\[1mm]
\vp_{12,n}(\theta) \to_{u} \overline{\vp}_{12}(\theta), \\[1mm]
s_{21,n}(\theta):=\beta^{-2}\log(1/h_n)\vp_{11,n}(\theta) + \sig^{-1}\vp_{21,n}(\theta) \to_{u} \overline{\vp}_{21}(\theta), \\[1mm]
s_{22,n}(\theta):=\beta^{-2}\log(1/h_n)\vp_{12,n}(\theta) + \sig^{-1}\vp_{22,n}(\theta) \to_{u} \overline{\vp}_{22}(\theta), \\[2mm]
\ds{\inf_{\theta\in K}|\overline{\vp}_{11}(\theta)\overline{\vp}_{22}(\theta) - \overline{\vp}_{12}(\theta)\overline{\vp}_{21}(\theta)|>0,}
\end{array}
\right.
\label{hm:vp-conditions}
\end{equation}
where the last condition must hold for any compact set $K \subset(0,2)\times(0,\infty)\times\mbbr$;
these conditions naturally come from the form of the normalized score function (see \eqref{hm:add.eq5} below).
For later reference, we note that for every such $K$,
\begin{align}
& \inf_{\theta\in K}|\vp_{11,n}(\theta)\vp_{22,n}(\theta) - \vp_{12,n}(\theta)\vp_{21,n}(\theta)|  \nn\\
& \qquad = \inf_{\theta\in K} \big| \vp_{11,n}(\theta) \sig\{s_{22,n}(\theta)-\beta^{-2}\log(1/h_n)\vp_{12,n}(\theta)\} \nn\\
&{}\qquad\qquad 
-\vp_{12,n}(\theta) \sig \{s_{21,n}(\theta)-\beta^{-2}\log(1/h_n)\vp_{11,n}(\theta)\} \big| \nn\\
& \qquad \gtrsim \inf_{\theta\in K}|\overline{\vp}_{11}(\theta)\overline{\vp}_{22}(\theta) - \overline{\vp}_{12}(\theta)\overline{\vp}_{21}(\theta)| + o(1) \gtrsim 1.
\nonumber
\end{align}
Under \eqref{hm:liminf.p}, it holds that $\sqrt{n}h_n^{1-1/\beta}\to_{u}\infty$ and
\begin{align}
|\vp_{n}(\theta)| &\lesssim_{u} \max\bigg( \frac{\log(1/h_n)}{\sqrt{n}}, \, \frac{1}{\sqrt{n}h_n^{1-1/\beta}} \bigg) \nn\\
&\lesssim_{u} \max\bigg( \frac{\log n}{\sqrt{n}}, \, n^{1/2-1/\beta} \bigg) \to_{u} 0.
\nonumber
\end{align}
Hereafter, we will often omit the dependence on $\theta$ and/or on $n$ from the notations, such as $\vp_{kl,n}$, $\ep_{j}$, $t_j$, $h$ and so on.

\medskip

We deduce from \cite{{DuM73}} (see also the references therein and \cite{MatTak06}) that for each nonnegative integers $k$ and $k'$,
\begin{equation}
|\p^{k}\p_{\beta}^{k'}\phi_{\beta}(y)| \lesssim_{u} (\log|y|)^{k'}|y|^{-\beta-1-k},\quad |y|\to\infty.
\label{hm:dens.ab}
\end{equation}
Let
\begin{equation}
f_{\beta}(y):=\frac{\p_{\beta}\phi_{\beta}}{\phi_{\beta}}(y), \qquad g_{\beta}(y):=\frac{\p\phi_{\beta}}{\phi_{\beta}}(y).
\nonumber
\end{equation}
By the property \eqref{hm:dens.ab} together with the facts that $f_{\beta}$ and $g_{\beta}$ are even and odd, respectively, the symmetric matrix
\begin{equation}
\Sigma(\beta) = \left\{\Sigma_{kl}(\beta)\right\} :=
\begin{pmatrix}
\E_{\theta}\{f_{\beta}(\ep_1)^2\} &  & \text{sym.} \\
\E_{\theta}\{\ep_{1}f_{\beta}(\ep_1)g_{\beta}(\ep_{1})\} & \E_{\theta}\{(1+\ep_{1}g_{\beta}(\ep_{1}))^{2}\} & \\
0 & 0 & \E_{\theta}\{g_{\beta}(\ep_1)^2\} \\
\end{pmatrix},
\label{hm:Sigma_def}
\end{equation}
which only depends on $\beta$ among the components of $\theta$, is continuous in $\beta$, and satisfies that $|\Sigma(\beta)|\lesssim_{u}1$.
Moreover, we introduce the non-degenerate block-diagonal matrix ($\top$ denotes the transpose)
\begin{equation}
\mci(\theta):=\mathrm{diag}\bigg\{
\begin{pmatrix}
\overline{\vp}_{11} & \overline{\vp}_{12} \\
-\overline{\vp}_{21} & -\overline{\vp}_{22}
\end{pmatrix}^{\top}
\begin{pmatrix}
\Sigma_{11} & \Sigma_{12} \\
\Sigma_{21} & \Sigma_{22}
\end{pmatrix}
\begin{pmatrix}
\overline{\vp}_{11} & \overline{\vp}_{12} \\
-\overline{\vp}_{21} & -\overline{\vp}_{22}
\end{pmatrix}
,\, \sig^{-2}\Sigma_{33}
\bigg\},
\nn
\end{equation}
which will turn out to be the asymptotic non-degenerate Fisher information matrix.

The normalized score (central sequence) is defined by
\begin{equation}
\D_{n}(\theta):=\vp_{n}(\theta)^{\top}\p_{\theta}\ell_{n}(\theta),
\nonumber
\end{equation}
and the normalized observed information matrix by
\begin{equation}
\mci_{n}(\theta)=\{\mci_{kl,n}(\theta)\}_{k,l} := - \vp_{n}(\theta)^{\top}\p_{\theta}^{2}\ell_{n}(\theta)\vp_{n}(\theta).
\nn
\end{equation}
The main claim of this section follows below.

\begin{thm}
\begin{enumerate}

\item The uniform LAN property holds:
\begin{equation}
\sup_{u\in K}\bigg|
\ell_{n}\left(\theta+\vp_{n}(\theta)u\right) - \ell_{n}\left(\theta\right) 
- \bigg( u^{\top}\D_{n}(\theta) - \frac{1}{2} u^{\top}\mci(\theta)u \bigg)\bigg| \cip_{u} 0
\nn
\end{equation}
for any compact set $K\subset\mbbr^{3}$, where $\D_{n}(\theta) \cil_{u} N_{3}(0,\mci(\theta))$ and $\mci(\theta)$ is positive definite for each $\theta\in(0,2)\times(0,\infty)\times\mbbr$.

\item There exists a local maximum $\tes$ of $\ell_{n}$ with probability tending to $1$, for which
\begin{equation}
\vp_{n}(\theta)^{-1}(\tes -\theta ) \cil_{u} N_{3}\left(0,\, \mci(\theta)^{-1} \right).
\nonumber
\end{equation}

\end{enumerate}

\label{hm:mle_thm}
 \end{thm}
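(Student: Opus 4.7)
The plan is to establish part (1) by a second-order Taylor expansion of $\ell_n$ combined with a central limit theorem for the normalised score and a law of large numbers for the observed information, and then to deduce part (2) via the standard LAN-based argument for the existence of a consistent local MLE.

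For the score, using $\p_\beta\ep_j=-\beta^{-2}\log(1/h_n)\,\ep_j$, $\p_\sigma\ep_j=-\ep_j/\sigma$ and $\p_\mu\ep_j=-h_n^{1-1/\beta}/\sigma$, direct differentiation yields
\begin{align*}
\p_\beta\ell_n&=\sum_{j=1}^n\Bigl\{f_\beta(\ep_j)-\beta^{-2}\log(1/h_n)\bigl(1+\ep_jg_\beta(\ep_j)\bigr)\Bigr\},\\
\p_\sigma\ell_n&=-\sigma^{-1}\sum_{j=1}^n\bigl(1+\ep_jg_\beta(\ep_j)\bigr),\qquad \p_\mu\ell_n=-\sigma^{-1}h_n^{1-1/\beta}\sum_{j=1}^n g_\beta(\ep_j).
\end{align*}
Multiplying by $\vp_n(\theta)^\top$ and grouping the coefficients of $1+\ep_j g_\beta(\ep_j)$, the $k$th component of $\D_n(\theta)$ for $k=1,2$ takes the form
$$
\D_{n,k}(\theta)=\frac{1}{\sqrt n}\sum_{j=1}^n\bigl\{\vp_{1k,n}(\theta)f_\beta(\ep_j)-s_{2k,n}(\theta)(1+\ep_jg_\beta(\ep_j))\bigr\},
$$
whereas the third equals $-\sigma^{-1}n^{-1/2}\sum_jg_\beta(\ep_j)$. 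The divergent factor $\log(1/h_n)$ is thereby absorbed into the finite limits $\overline{\vp}_{21},\overline{\vp}_{22}$ of $s_{21,n},s_{22,n}$, which is precisely the purpose of the non-diagonal norming in \eqref{hm:vp-conditions}. Applied to $-\vp_n^\top\p_\theta^2\ell_n\,\vp_n$, the same algebraic recombination absorbs the $(\log(1/h_n))^k$ contributions produced by further $\p_\beta$-differentiations of $\ep_j$, leaving empirical averages with deterministic weights that converge uniformly.

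The Lindeberg--Feller theorem then gives $\D_n(\theta)\cil_u N_3(0,\mci(\theta))$. The summands are centred: $\E f_\beta(\ep_1)=0$ because $\int\p_\beta\phi_\beta=0$; $\E(1+\ep_1g_\beta(\ep_1))=0$ by integration by parts; and $\E g_\beta(\ep_1)=0$ by the oddness of $g_\beta$. The variance computation reproduces exactly $\mci(\theta)$, its block-diagonal structure coming from the parities of $f_\beta$ and $g_\beta$; Lindeberg's condition follows from the tail bound \eqref{hm:dens.ab}, which provides moments of order strictly greater than $2$ for $f_\beta(\ep_1)$, $\ep_1g_\beta(\ep_1)$ and $g_\beta(\ep_1)$, uniformly in $\beta$ on compacts. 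A uniform LLN applied to the reorganised Hessian gives $\mci_n(\theta)\cip_u\mci(\theta)$, and continuity of the limit together with $\vp_n\to_u 0$ upgrades this to $\sup_{|u|\le M}|\mci_n(\theta+\vp_n u)-\mci(\theta)|\cip_u 0$; the uniform LAN statement then follows from second-order Taylor with Lagrange remainder. Positive definiteness of $\mci(\theta)$ comes from that of the $2\times 2$ upper block of $\Sigma(\beta)$ (Cauchy--Schwarz applied to $f_\beta$ and $1+\ep g_\beta$, linearly independent in $L^2(\phi_\beta)$) combined with $|\overline{\vp}_{11}\overline{\vp}_{22}-\overline{\vp}_{12}\overline{\vp}_{21}|>0$.

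For part (2) the standard argument applies: on the sphere $\{|u|=M\}$ the LAN expansion gives $\ell_n(\theta+\vp_n u)-\ell_n(\theta)\le M|\D_n(\theta)|-\tfrac12\lambda_{\min}(\mci(\theta))M^2+o_p(1)$ uniformly in $u$, which is strictly negative for $M$ chosen large on an event of probability arbitrarily close to $1$, because $\D_n=O_p(1)$. Continuity then forces a local maximiser $\tes$ inside the ball, and the first-order condition $\p_\theta\ell_n(\tes)=0$ together with Hessian convergence yields $\vp_n(\theta)^{-1}(\tes-\theta)=\mci(\theta)^{-1}\D_n(\theta)+o_p(1)\cil_u N_3(0,\mci(\theta)^{-1})$. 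The principal obstacle throughout is the systematic bookkeeping of the $\log(1/h_n)$-divergent contributions that appear whenever one differentiates in $\beta$: the non-diagonal norming $\vp_n$ and the recombinations $s_{21,n},s_{22,n}$ have to be threaded consistently through the first and second derivatives of $\ell_n$ while preserving uniformity in $\theta$; the polylogarithmic density bound \eqref{hm:dens.ab} is the workhorse supplying the uniform moment estimates that make every step work.
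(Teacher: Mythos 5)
Your computations of the score and of the log-cancellation via $s_{21,n},s_{22,n}$, the Lindeberg--Feller argument for $\D_n(\theta)$, the Cauchy--Schwarz argument for positive definiteness of the upper block of $\Sigma(\beta)$, and the convergence $\mci_n(\theta)\cip_u\mci(\theta)$ all match the substance of the paper (which packages the Taylor-expansion argument through Sweeting's conditions rather than writing it out directly). However, there is a genuine gap at the step you dispose of in one clause: the claim that ``continuity of the limit together with $\vp_n\to_u 0$ upgrades'' $\mci_n(\theta)\cip_u\mci(\theta)$ to $\sup_{|u|\le M}|\mci_n(\theta+\vp_n(\theta)u)-\mci(\theta)|\cip_u 0$. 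This does not follow: the mode $\cip_u$ is convergence of $\mci_n(\theta)$ \emph{under} $\pr_\theta$, uniformly in $\theta$, whereas here you must control $\mci_n$ at the perturbed points $\theta+\vp_n(\theta)u$ while the data are still generated under $\pr_\theta$. Knowing that $\mci_n(\theta')\cip\mci(\theta')$ under $\pr_{\theta'}$ says nothing a priori about its behaviour under $\pr_\theta$ (contiguity is not yet available before LAN is proved), so one needs a stochastic-equicontinuity argument for the Hessian over the shrinking neighbourhood $\mathfrak{N}_n(c;\theta)$. This is exactly the content of the paper's conditions \eqref{hm:C2i-1} and \eqref{hm:C2ii-1}, and their verification is where most of the work lies: one must show $\sup_{\theta'\in\mathfrak{N}_n(c;\theta)}|\sqrt{n}(\beta'-\beta)|\vee|\sqrt{n}(\log(1/h))^{-1}(\sigma'-\sigma)|\lesssim_u 1$ (via the minimum-eigenvalue bound on $A_n^\top A_n$), deduce $\sup|h^{1/\beta-1/\beta'}-1|\to_u 0$ and the stability $\vp_n(\theta')^{-1}\vp_n(\theta)\to I_3$ of the $\theta$-dependent norming itself (which you never address, although $\vp_n$ contains $h^{-(1-1/\beta)}$ and $\log(1/h)$ factors through $\beta$ and $\sigma$), establish $\sup_{\theta'\in\mathfrak{N}_n(c;\theta)}|\ep_j(\theta')|\lesssim_u|\ep_j(\theta)|+o_{u,c}(1)$, and then bound the third-derivative remainder by $n^{-1/2}\{\log(1/h)\}^6\cdot n^{-1}\sum_j\{1+\log(1+|\ep_j(\theta)|)\}^3\to_u 0$ as in \eqref{hm:proof+1}.

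The same omission propagates to your part (2): the passage from the first-order condition to $\vp_n(\theta)^{-1}(\tes-\theta)=\mci(\theta)^{-1}\D_n(\theta)+o_p(1)$ uses precisely the Hessian control at random points near $\theta$ (and the norming stability), so it cannot be labelled ``standard'' without the estimates above. In short, your outline is the right skeleton and coincides with the paper's computations where you are explicit, but the heart of the proof --- uniformity of the observed information and of the rate matrix over the $\vp_n$-shrinking neighbourhoods under the fixed measure $\pr_\theta$, including the uniformity in $\theta$ required by the $\cip_u$ and $\cil_u$ statements --- is asserted rather than proved.
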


 \medskip
 
Several remarks on Theorem \ref{hm:mle_thm} are in order.

\begin{enumerate}
\item  The non-degeneracy of $\varphi_n(\theta)$ and $\mci(\theta)$ in Theorem \ref{hm:mle_thm} is essential.
Let $w: \mbbr^{3}\to\mbbrp$ be any non-constant function such that:
$w(x)=w(-x)$; the set $\{x\in\mbbr^{3}:\,w(x)\le c\}$ is convex for each $c>0$; $w$ is continuous at $0$;
finally, $\lim_{|z|\to \infty} e^{-\epsilon |z|^2}w(z) = 0$ for all $\epsilon > 0$.
Then, it follows from the H\'{a}jek-Le Cam asymptotic minimax theorem (\cite{Haj72}, \cite[Chapter II.12]{IbrHas81}, \cite{LeC72}) that 
for each $\tz\in(0,2)\times(0,\infty)\times\mbbr$ and $\del>0$ and for any sequence of estimators $\hat{\theta}_n$ we have
\begin{align}
& \liminf_{n\to \infty} \sup_{\theta:\, |\theta- \theta_0| < \del} 
\E_\theta\left\{ w\left( \varphi_n(\theta_0)^{-1}(\hat{\theta}_n - \theta)\right) \right\}
\nn\\
& {}\qquad \ge
\int_{\mathbb{R}^d}w\left(\mci(\theta_0)^{-1/2}z\right)\phi(z)\mathrm{d}z,
\nonumber
\end{align}
where $\phi$ denotes the density of the three-dimensional standard normal distribution.
Theorem \ref{hm:mle_thm} ensures that a sequence of MLE is asymptotically efficient.
It should be emphasized that the element $\mci(\theta)$ does depend on a spcific choice of the non-diagonal norming matrix $\vp_{n}(\theta)$.

\item Several examples of rate matrices $\vp_{n}=\varphi_n(\theta)$ satisfying the conditions \eqref{hm:vp-conditions} can be elicited. For instance,  the two following rate matrices
	  \begin{equation}\label{phibeta}
	      \varphi_n = \frac{1}{\sqrt{n}}\begin{pmatrix}

					    1			  & 0 &
					    0 \\  -\beta^{-2} \sigma \log (1/h)  & 1 &0 \\
					    0& 0 &  h^{-(1-1/\beta)}
	      
			  \end{pmatrix},
\end{equation}
which gives $\overline{\vp}_{11} = 1$, $\overline{\vp}_{12}=0$, $\overline{\vp}_{21}=0$ and
 $\overline{\vp}_{22} = \sigma^{-1}$ and
 \begin{equation}\label{phis}
	      \varphi_n = \frac{1}{\sqrt{n}}\begin{pmatrix}

					   -1/\log(1/h)		  & 
					    1  & 0 \\ 0  &  -\beta^{-2}\sigma \log (1/h) &0 \\
					    0 & 0 & h^{-(1-1/\beta)}
	      
			  \end{pmatrix},
\end{equation}
which gives $\overline{\vp}_{11} = 0$, $\overline{\vp}_{12}=1$, $\overline{\vp}_{21}=\beta^{-2}$ and
	   $\overline{\vp}_{22} = 0$ can be considered. Remark that  these examples are non-diagonal rate
matrices depending on the scale parameter $\sigma$ and the stable index $\beta$.
For each $\tz\in(0,2)\times(0,\infty)\times\mbbr$ and $\del>0$, using the rate matrix \eqref{phibeta} and the function $w(x,y,z)=x^2$ we can deduce the lower bound
\begin{align}
\liminf_{n\to \infty} \sup_{\theta:\, |\theta- \theta_0| <\del}
\E_\theta\Big\{ \Big(\sqrt{n} (\hat{\beta}_n - \beta)\Big)^2 \Big\} 
\ge \frac{\Sigma_{22}(\beta_{0})}{\Sigma_{11}(\beta_{0})\Sigma_{22}(\beta_{0})-\Sigma_{12}^{2}(\beta_{0})}
\label{hm:lb-beta}
\end{align}
for $\Sigma(\beta_{0})=\{\Sigma_{kl}(\beta_{0})\}$ specified in \eqref{hm:Sigma_def}. 
Likewise, using the rate matrix \eqref{phis} and the function $w(x,y,z)=y^2$, we can deduce that
\begin{align}
& \liminf_{n\to \infty} \sup_{\theta:\, |\theta- \theta_0|<\del} 
\E_\theta\left\{ \left(\frac{\sqrt{n}}{\beta^{-2} \sigma \log(1/h)} (\hat{\sigma}_n - \sigma) \right)^2 \right\} \nn\\
&\qquad \ge \frac{\Sigma_{22}(\beta_{0})}{\Sigma_{11}(\beta_{0})\Sigma_{22}(\beta_{0})-\Sigma_{12}^{2}(\beta_{0})}.
\label{hm:lb-sigma}
\end{align}

\item The explicit form of $\D_{n}(\theta)$ itself will not explicitly appear in the proof of Theorem \ref{hm:mle_thm}.
It is expressed as
\begin{align}
\D_{n}(\theta) = 
\tcr{
\mathrm{diag}\left\{\begin{pmatrix}
\vp_{11,n} & -s_{21,n} \\
\vp_{12,n} & -s_{22,n}
\end{pmatrix}
,~1\right\}
}
\,\frac{1}{\sqrt{n}}\sumj \left(
\begin{array}{c}
f_{\beta}(\ep_{j}) \\ 1+\ep_{j}g_{\beta}(\ep_{j}) \\ g_{\beta}(\ep_{j})
\end{array}
\right),
\label{hm:add.eq5}
\end{align}
for which a direct application of the Lindeberg-Feller theorem yields that $\D_{n}(\theta) \cil N_{3}(0,\mci(\theta))$ under $\pr_{\theta}$.

\item 
What is essential in the derivation of the likelihood asymptotics is the self-similarity of $X$, hence the stable-distribution property of $\D_{j}X$ since the stable {\lp} (including a Wiener process, of course) is the only self-similar one among the whole family of {\lp es}.
Further, in the nature of high-frequency asymptotics, small-time behavior of $\D_{j}X$ is of primary concern.
In this respect, it is readily expected that a similar LAN and/or LAMN phenomena can be deduced for locally stable {\lp es}, 
and even more generally, for non-linear stochastic differential equations driven by a locally stable {\lp}, 
where a {\lp} $J'$ is said to be locally $\beta$-stable if $\mcl(h^{-1/\beta}J'_{h})$ weakly convergence to $\mcl(J_{1})$ as $h\to 0$.
Of course, such considerations must require expertise of stochastic calculus on Poisson space.
We refer to \cite{IvaKulMas15} and \cite{CleGlo15} for related previous studies in case of known $\beta$.
\end{enumerate}

\medskip

\begin{proof}[Proof of Theorem \ref{hm:mle_thm}]
We will complete the proof by verifying Sweeting's conditions [C1] and [C2] in \cite{Swe80}, which here read as follows.
\begin{itemize}
\item Uniform convergence of the observed information matrix:
\begin{equation}
\mci_{n}(\theta) \cip_{u} \mci(\theta),
\label{hm:sw-1}
\end{equation}
where $\mci(\theta)$ is positive definite for each $\theta$.

\item Uniform growth rate of the norming matrix $\vp_{n}$:
\begin{equation}
\sup_{\theta' \in \mathfrak{N}_{n}(c;\theta)} | \vp_{n}(\theta')^{-1}\vp_{n}(\theta) - I_{3} | \to_{u} 0
\label{hm:C2i-1}
\end{equation}
for any $c>0$, where
\begin{equation}
\mathfrak{N}_{n}(c;\theta) := \left\{\theta' \in (0,2)\times(0,\infty)\times\mbbr:\, |\vp_{n}(\theta)^{-1}(\theta'-\theta)|\le c \right\},
\nonumber
\end{equation}
a shrinking neighborhood of $\theta$, and where $I_{k}$ denotes the $k\times k$-identity matrix.

\item Yet another uniform convergence of $\mci_{n}(\theta)$:
\begin{equation}
\sup_{\theta^{1},\theta^{2},\theta^{3} \in \mathfrak{N}_{n}(c;\theta)}
\left| \vp_{n}(\theta)^{\top}\{ \p_{\theta}^{2}\ell_{n}(\theta^{1},\theta^{2},\theta^{3})-\p_{\theta}^{2}\ell_{n}(\theta) \} \vp_{n}(\theta) \right| \to_{u} 0
\label{hm:C2ii-1}
\end{equation}
for each $c>0$, where
\begin{equation}
\p_{\theta}^{2}\ell_{n}(\theta^{1},\theta^{2},\theta^{3}) := 
\begin{pmatrix}
\p_{\beta}^{2}\ell_{n}(\theta^{1}) & \p_{\beta}\p_{\sig}\ell_{n}(\theta^{1}) & \p_{\beta}\p_{\mu}\ell_{n}(\theta^{1}) \\
\p_{\beta}\p_{\sig}\ell_{n}(\theta^{2}) & \p_{\sig}^{2}\ell_{n}(\theta^{2}) & \p_{\sig}\p_{\mu}\ell_{n}(\theta^{2}) \\
\p_{\beta}\p_{\mu}\ell_{n}(\theta^{3}) & \p_{\sig}\p_{\mu}\ell_{n}(\theta^{3}) & \p_{\mu}^{2}\ell_{n}(\theta^{3}) \\
\end{pmatrix}.
\nonumber
\end{equation}

\end{itemize}
Having proved these three claims, by means of \cite[Theorems 1 and 2]{Swe80} it is straightforward to deduce the assertions as in \cite[Theorem 2.10]{Mas15LM}; although the latter theorem deals with the diagonal norming, the proof can be traced without any essential change.

\paragraph{Proof of \eqref{hm:sw-1}.}

Since $\E_{\theta}\{f_{\beta}(\ep_1)\}=0$, we have $\Sigma_{12}=\E_{\theta}\{f_{\beta}(\ep_1)(1+ \ep_{1}g_{\beta}(\ep_{1}))\}$.
Schwarz's inequality then shows that $\Sigma_{12}^{2}<\Sigma_{11}\Sigma_{22}$, hence we also have $|\Sigma(\beta)|\gtrsim_{u}1$:
$\Sigma(\beta)$ is positive definite for each $\beta\in(0,2)$.
It remains to show that
\begin{equation}
\left| \E_{\theta}\{\mci_{kl,n}(\theta)\} - \mci_{kl}(\theta) \right| + \var_{\theta}\{\mci_{kl,n}(\theta)\} \to_{u} 0,\qquad k,l\in\{1,2,3\};
\nonumber
\end{equation}
we refer to \cite[the proof of Theorem 3.2]{Mas15LM} for the explicit expressions of all the elements of $\p_{\theta}^{2}\ell_{n}(\theta)$.
We will repeatedly make use of the following basic fact without mention:
for a row-wise independent triangular array $\{\xi_{nj}(\theta)\}$ with $\E_{\theta}\{\xi_{nj}(\theta)\}=0$ for each $\theta$, we have $\sumj \xi_{nj}(\theta) \cip_{u} c(\theta)$
if both $\sumj \E_{\theta}\{\xi_{nj}(\theta)\} \to_{u} c(\theta)$ and $\sumj \E_{\theta}\{\xi_{nj}^{2}(\theta)\} \to_{u} 0$.

\medskip

Since $\E_{\theta}\{(\p^{2}\phi_{\beta}/\phi_{\beta})(\ep_{j})\}=\int \{\p^{2}\phi_{\beta}(y)\}dy = 0$ for each $\theta$, we have
\begin{align}
\E_{\theta}\{\mci_{33,n}(\theta)\} 
&=\E_{\theta}\bigg( \frac{1}{n}\sumj\sig^{-2}\left\{g_{\beta}(\ep_{j})^{2}-(\p^{2}\phi_{\beta}/\phi_{\beta})(\ep_{j})\right\} \bigg) = \mci_{33}(\theta).
\nonumber
\end{align}
By \eqref{hm:dens.ab} we have $\var_{\theta}\{\mci_{33,n}(\theta)\}\lesssim_{u}1/n$, 
hence $\mci_{33,n}(\theta) \cip_{u} \mci_{33}(\theta)$. We can handle the lower-left $1\times 2$-part of $\mci_{n}(\theta)$ in a similar manner: we have
\begin{align}
\E_{\theta}\{\mci_{31,n}(\theta)\} = -\frac{\vp_{21,n}}{\sig}\E\left( g_{\beta}(\ep_{1})+\ep_{1}\p g_{\beta}(\ep_{1})\right)
+\frac{\vp_{11,n}}{\sig}\E\{\p_{\beta}g_{\beta}(\ep_{1})\} =0 =\mci_{31}(\theta)
\nonumber
\end{align}
and
\begin{align}
\E_{\theta}\{\mci_{32,n}(\theta)\} = -\frac{\vp_{22,n}}{\sig}\E\left( g_{\beta}(\ep_{1})+\ep_{1}\p g_{\beta}(\ep_{1})\right) = 0=\mci_{32}(\theta),
\nonumber
\end{align}
and again \eqref{hm:dens.ab} controls their variances, concluding that $\mci_{31,n}(\theta) \cip_{u} \mci_{31}(\theta)$ and $\mci_{32,n}(\theta) \cip_{u} \mci_{32}(\theta)$.

Turning to the upper-left $2\times 2$-part of $\mci_{n}(\theta)$, we first note that the uniform convergences in probability:
\begin{align}
Q_{1,n} &:= \frac{1}{n}\sumj f_{\beta}(\ep_{j})^{2} \cip_{u} \Sigma_{11}, \nn\\
Q_{2,n} &:= -\frac{1}{n}\sumj \ep_{j}\p_{\beta}g_{\beta}(\ep_{j}) = -\frac{1}{n}\sumj \ep_{j}\frac{\p_{\beta}\p\phi_{\beta}}{\phi_{\beta}}(\ep_{j}) + \frac{1}{n}\sumj \ep_{j}f_{\beta}(\ep_{j})g_{\beta}(\ep_{j}) \nn\\
&\cip_{u} \Sigma_{12}, \nn\\
Q_{3,n} &:= \frac{1}{n}\sumj \{-\ep_{j}g_{\beta}(\ep_{j}) -\ep_{j}^{2}\p g_{\beta}(\ep_{j})\} \nn\\
&= \frac{1}{n}\sumj \left[ \left\{1+\ep_{j}g_{\beta}(\ep_{j})\right\}^{2} - \left\{1+3\ep_{j}g_{\beta}(\ep_{j}) + \ep_{j}^{2} (\p^{2}\phi_{\beta}/\phi_{\beta})(\ep_{j}) \right\} \right] \nn\\
&\cip_{u} \Sigma_{22}.
\nonumber
\end{align}
Under \eqref{hm:vp-conditions}, we can see that the elements of $\{\mci_{kl,n}(\theta):\, k,l\in\{1,2\}\}$ involving the diverging factors ``$\log(1/h)$'' and ``$\{\log(1/h)\}^{2}$'' are completely canceled out, thereby resulting in \eqref{hm:sw-1}:
\begin{align}
& -\frac{1}{n}
\begin{pmatrix}
\vp_{11,n} & \vp_{12,n} \\
\vp_{21,n} & \vp_{22,n}
\end{pmatrix}^{\top}
\p_{(\beta,\sig)}^{2}\ell_{n}(\theta)
\begin{pmatrix}
\vp_{11,n} & \vp_{12,n} \\
\vp_{21,n} & \vp_{22,n}
\end{pmatrix} \nn\\
&=
\begin{pmatrix}
\vp_{11,n} & \vp_{12,n} \\
\vp_{21,n} & \vp_{22,n}
\end{pmatrix}^{\top}
\begin{pmatrix}
Q_{1,n}-2\beta^{-2}\log(1/h) Q_{2,n} + \beta^{-4}\log^{2}(1/h) Q_{3,n} & \text{sym.} \\
-\sig^{-1}Q_{2,n}+\sig^{-1}\beta^{-2}\log(1/h)Q_{3,n} & \sig^{-2}Q_{3,n}
\end{pmatrix}
\nn\\
&{}\qquad\qquad \times 
\begin{pmatrix}
\vp_{11,n} & \vp_{12,n} \\
\vp_{21,n} & \vp_{22,n}
\end{pmatrix}
\tcr{
+ O_p\left(\frac{\log(1/h)}{\sqrt{n}}\right)
}
\nn\\
&=\left(
\begin{array}{c}
\vp_{11,n}^{2}Q_{1,n} - 2\vp_{11,n}s_{21,n}Q_{2,n} + s_{21,n}^{2}Q_{3,n} \\
\vp_{11,n}\vp_{12,n}Q_{1,n} - (\vp_{11,n}s_{22,n}+\vp_{12,n}s_{21,n})Q_{2,n}+ s_{21,n}s_{22,n}Q_{3,n}
\end{array}
\right. \nn\\
& \hspace{3cm}\left.
\begin{array}{c}
\text{sym.} \\
\vp_{12,n}^{2}Q_{1,n} - 2\vp_{12,n}s_{22,n}Q_{2,n} + s_{22,n}^{2}Q_{3,n}
\end{array}
\right)
\tcr{
+ O_p\left(\frac{\log(1/h)}{\sqrt{n}}\right)
}
\nn\\
&=
\begin{pmatrix}
\vp_{11,n} & \vp_{12,n} \\
-s_{21,n} & -s_{22,n}
\end{pmatrix}^{\top}
\begin{pmatrix}
Q_{1,n} & Q_{2,n} \\
Q_{2,n} & Q_{3,n}
\end{pmatrix}
\begin{pmatrix}
\vp_{11,n} & \vp_{12,n} \\
-s_{21,n} & -s_{22,n}
\end{pmatrix}
\tcr{
+o_p(1)
}
\nn\\
& \cip_{u}
\begin{pmatrix}
\mci_{11}(\theta) & \mci_{12}(\theta) \\
\mci_{21}(\theta) & \mci_{22}(\theta)
\end{pmatrix}.
\nonumber
\end{align}

\paragraph{Proof of \eqref{hm:C2i-1}.}

Fix $c>0$ in the rest of this proof. Write
\begin{equation}
n^{-1/2}\tilde{\vp}_{n}=(n^{-1/2}\tilde{\vp}_{kl,n})_{k,l\in\{1,2\}}
\nonumber
\end{equation}
for the upper-left $2\times 2$-part of $\vp_{n}$. Then it suffices to show that
\begin{equation}
\sup_{\theta' \in \mathfrak{N}_{n}(c;\theta)} \bigg(| \tilde{\vp}_{n}(\theta')^{-1}\tilde{\vp}_{n}(\theta) - I_{2} |
\vee | h^{1/\beta-1/\beta'} - 1 | \bigg) \to_{u} 0.
\nonumber
\end{equation}
To this end, we first prove
\begin{equation}
\sup_{\theta' \in \mathfrak{N}_{n}(c;\theta)}
|\sqrt{n}(\beta'-\beta)| \vee \bigg|\frac{\sqrt{n}}{\log(1/h)}(\sig'-\sig)\bigg| \lesssim_{u} 1.
\label{hm:C2i-2}
\end{equation}
Note that $| \sqrt{n}\tilde{\vp}_{n}(\theta)^{-1}(\rho'-\rho)| \le c$ for $\theta' \in \mathfrak{N}_{n}(c;\theta)$,
where $\rho:=(\beta,\sig)^{\top}$.
Let
\begin{equation}
\overline{\del}=\overline{\del}(\theta):=\overline{\vp}_{11}\overline{\vp}_{22} - \overline{\vp}_{12}\overline{\vp}_{21},
\nonumber
\end{equation}
whose absolute value under \eqref{hm:vp-conditions} is bounded away from $0$: $|\overline{\del}(\theta)|^{-1} \lesssim_{u} 1$. For convenience, we will write
\begin{equation}
\text{$\al_{n}(\theta,\theta')=o_{u,c}(1)$ (resp. $=O_{u,c}(1)$) \quad if \quad $\sup_{\theta' \in \mathfrak{N}_{n}(c;\theta)}|\al_{n}(\theta,\theta')|\to_{u}0$ (resp. $\lesssim_{u}1$)}
\nonumber
\end{equation}
for any function $\al_{n}$ on $\Theta^{2}$. Straightforward algebra leads to the identity
\begin{equation}
\sqrt{n}\tilde{\vp}_{n}(\theta)^{-1}(\rho'-\rho) = A_{n}(\theta) b_{n}(\theta,\theta')
\label{hm:me-1}
\end{equation}
with $A_{n}(\theta) \in \mbbr^{2}\otimes\mbbr^{2}$ satisfying that
\begin{equation}
A_{n}(\theta) = \{\overline{\del} + o_{u,c}(1)\}^{-1}
\begin{pmatrix}
s_{22,n} & -\vp_{12,n} \\
-s_{21,n} & \vp_{11,n}
\end{pmatrix}
\nonumber
\end{equation}
and
\begin{equation}
b_{n}(\theta,\theta') := 
\begin{pmatrix}
\sig\sqrt{n}(\beta'-\beta) \\
\sig\beta^{-2}\log(1/h) \sqrt{n}(\beta'-\beta) + \sqrt{n}(\sig'-\sig)
\end{pmatrix}.
\nonumber
\end{equation}
By the continuity in $\theta$, we have
\begin{equation}
A_{n}(\theta)^{\top}A_{n}(\theta) = \frac{1}{\overline{\del}^{2}}
\begin{pmatrix}
\overline{\vp}_{22}^{2}+\overline{\vp}_{21}^{2} & \text{sym.} \\
-\overline{\vp}_{12}\overline{\vp}_{22}-\overline{\vp}_{21}\overline{\vp}_{11} & \overline{\vp}_{11}^{2}+\overline{\vp}_{12}^{2}
\end{pmatrix} + o_{u,c}(1).
\nonumber
\end{equation}
The minimum eigenvalue of the first term in the right-hand side is
\begin{equation}
\underline{\lam}_{n}=\underline{\lam}_{n}(\theta) := \frac{1}{2} \overline{\del}^{-2} (f-\sqrt{f^{2}-4g})
\nonumber
\end{equation}
with $f:= \overline{\vp}_{11}^{2}+\overline{\vp}_{22}^{2}+\overline{\vp}_{12}^{2}+\overline{\vp}_{21}^{2}$ and $g:= (\overline{\vp}_{11}\overline{\vp}_{22}-\overline{\vp}_{12}\overline{\vp}_{21})^{2}$; 
by completing squares, we see that the term $f^{2}-4g$ is nonnegative.
Since $f \mapsto f-\sqrt{f^{2}-4g}\,(> 0)$ is monotonically decreasing on $(2\sqrt{g},\infty)$ and $|f|\lesssim_{u}1$, we conclude that
\begin{equation}
|\underline{\lam}_{n}(\theta)|^{-1}\lesssim_{u}1.
\nonumber
\end{equation}
Therefore, by \eqref{hm:me-1} we have
\begin{align}
& \sup_{\rho:\,| \sqrt{n}\tilde{\vp}_{n}(\theta)^{-1}(\rho'-\rho)| \le c}|b_{n}(\theta,\theta')| \nn\\
&= \sup_{\rho:\,| \sqrt{n}\tilde{\vp}_{n}(\theta)^{-1}(\rho'-\rho)| \le c}\left| A_{n}(\theta) ^{-1}\sqrt{n}\tilde{\vp}_{n}(\theta)^{-1}(\rho'-\rho) \right| \nn\\
&\le c|\underline{\lam}_{n}(\theta)|^{-1/2} \lesssim_{u}1,
\nonumber
\end{align}
so that $|\sqrt{n}(\beta'-\beta)| \vee |\sqrt{n}(\log(1/h))^{-1}(\sig'-\sig)| = O_{u,c}(1)$, hence \eqref{hm:C2i-2}.

\medskip

Next, using \eqref{hm:C2i-2}, which in particular implies that $\sup_{\theta'\in\mathfrak{N}_{n}(c;\theta)}\log(1/h)\{|\beta'-\beta| \vee |\sig'-\sig|\} \to_{u}0$, we see that
\begin{equation}
\sup_{\theta'\in\mathfrak{N}_{n}(c;\theta)}|\tilde{\vp}_{n}(\theta')^{-1}\tilde{\vp}_{n}(\theta) - I_{2}|\to_{u}0.
\label{hm:me-2}
\end{equation}
Indeed, the $(2,1)$th entry equals
\begin{align}
& \{ \sig'\overline{\del}(\theta') + o_{u,c}(1)\}^{-1}\{\overline{\vp}_{21}(\theta)\overline{\vp}_{11}(\theta')-\overline{\vp}_{11}(\theta)\overline{\vp}_{21}(\theta')\} \nn\\
&= \sig^{\prime -1}\overline{\del}(\theta')^{-1}\overline{\vp}_{11}(\theta)\overline{\vp}_{11}(\theta') \nn\\
&{}\qquad \times
\left\{ \beta^{\prime -2}\log(1/h)(\sig'-\sig) + \sig \log(1/h)(\beta^{\prime -2}-\beta^{-2}) \right\} + o_{u,c}(1) \nn\\
&= o_{u,c}(1),
\nonumber
\end{align}
and the $(1,2)$, $(1,1)$ and $(2,2)$th entries can be handled similarly.
By means of the mean-value theorem and \eqref{hm:C2i-2},
\begin{equation}
| h^{1/\beta-1/\beta'} - 1 | \lesssim_{u} | h^{1/\beta-1/\beta''} \log(1/h)(\beta'-\beta) |
\lesssim h^{1/\beta-1/\beta''}\cdot o_{u}(1)
\nn
\end{equation}
for a point $\beta''$ on the segment joining $\beta$ and $\beta'$. 
It follows from \eqref{hm:C2i-2} that
\begin{equation}
\sup_{\theta'\in\mathfrak{N}_{n}(c;\theta)} h^{1/\beta-1/\beta''} \le \sup_{\theta'\in\mathfrak{N}_{n}(c;\theta)} (h^{-|\beta''-\beta|})^{1/(\beta''\beta)} \lesssim_{u} 1,
\nn
\end{equation}
followed by $\sup_{\theta' \in \mathfrak{N}_{n}(c;\theta)} | h^{1/\beta-1/\beta'} - 1 | \to_{u} 0$.
This combined with \eqref{hm:me-2} yields \eqref{hm:C2i-1}.

\paragraph{Proof of \eqref{hm:C2ii-1}.}

For $\theta'\in\mathfrak{N}_{n}(c;\theta)$ we have
\begin{equation}
|\ep_{j}(\theta')| = \bigg| \bigg( \ep_{j}(\theta) + \frac{1}{\sig\sqrt{n}}\sqrt{n}h^{1-1/\beta}(\mu-\mu') \bigg)\frac{\sig}{\sig'}h^{1/\beta-1/\beta'} \bigg| \lesssim_{u} |\ep_{j}(\theta)| + o_{u,c}(1).
\nonumber
\end{equation}
It follows from \eqref{hm:log-lf} and \eqref{hm:dens.ab} that for each $k,l,m$,
\begin{equation}
\frac{1}{n}\big|\p_{\beta}^{k}\p_{\sig}^{l}\p_{\mu}^{m}\ell_{n}(\theta)\big| \lesssim_{u} \{\log(1/h)\}^{k}h^{(1-1/\beta)m} \frac{1}{n}\sumj \left\{1 + \log\left(1+|\ep_{j}(\theta)|^{2}\right)\right\}^{k}.
\nn
\end{equation}
Hence, we can (rather roughly) estimate as follows:
\begin{align}
& \sup_{\theta^{1},\theta^{2},\theta^{3} \in \mathfrak{N}_{n}(c;\theta)}
\left| \vp_{n}(\theta)^{\top}\{ \p_{\theta}^{2}\ell_{n}(\theta^{1},\theta^{2},\theta^{3})-\p_{\theta}^{2}\ell_{n}(\theta) \} \vp_{n}(\theta) \right| \nn\\
& \lesssim_{u} 
\sup_{\theta^{1},\theta^{2},\theta^{3},\theta' \in \mathfrak{N}_{n}(c;\theta)}
\left| \vp_{n}(\theta)^{\top} \left\{ \p_{\theta}^{3}\ell_{n}(\theta^{1},\theta^{2},\theta^{3}) [\theta' - \theta] \right\} \vp_{n}(\theta) \right|
\nn\\
&\lesssim_{u} \frac{1}{\sqrt{n}} \{\log(1/h)\}^{6} \sup_{\{\beta',\beta''\}} h^{(1/\beta'-1/\beta'')3} 
\sup_{\theta' \in \mathfrak{N}_{n}(c;\theta)} \frac{1}{n}\sumj \left\{1 + \log\left(1+|\ep_{j}(\theta')|\right)\right\}^{3} \nn\\
&\lesssim_{u} \frac{1}{\sqrt{n}} \{\log(1/h)\}^{6} \frac{1}{n}\sumj \left\{1 + \log\left(1+|\ep_{j}(\theta)|\right)\right\}^{3}
\label{hm:proof+1} \\
&\to_{u}0,
\nn
\end{align}
where the supremum $\sup_{\{\beta',\beta''\}}$ is taken over $\beta',\beta''\in(0,2)$ both lying in a closed ball with center $\beta$ and radius of $c'/\log(1/h)$ for some constant $c'>0$ possibly depending on $c$;
for the estimate \eqref{hm:proof+1}, we made use of the last part of the proof of \eqref{hm:C2i-1} and also the fact that
$\sup_{\theta' \in \mathfrak{N}_{n}(c;\theta)}|\ep_{j}(\theta')| \lesssim_{u} |\ep_{j}(\theta)| + o_{u,c}(1)$
to replace $\ep_{j}(\theta')$ by $\ep_{j}(\theta)$.
\end{proof}

\section{Simple efficient estimator}\label{hm:sec_optest}

The log-likelihood function \eqref{hm:log-lf} is rather complicated and not given in a closed form except for the Cauchy case $\beta=1$.
In particular, by the parameter $\beta$ which is here also present inside the density $\phi_{\beta}$, the optimization issue has difficulty not shared with the classical i.i.d. setting where $h\equiv 1$, hence the standard library for fitting a stable distribution on computer, such as \cite{stable5.3} and those cited in \cite{MatTak06}, cannot be of direct use.
Still, once an easy-to-compute initial estimator having an asymptotic behavior good enough, we can benefit from the classical scoring, which enables us to bypass time-consuming numerical search for $\tes$.
The purpose of this section is to provide such an efficient estimator.

We keep using the notations introduced in Section \ref{hm:sec_lfa}.
Throughout this section, we fix a true value $\theta\in(0,2)\times(0,\infty)\times\mbbr$, and proceed with the single image measure $\pr:=\pr_{\theta}$ (so $\E:=\E_{\theta}$).

\subsection{Scoring procedure and asymptotic normality}
\label{hm:sec_optest-1}

By Theorem \ref{hm:mle_thm} we can consider a sequence of $\sig(X_{t_{j}}:\, j\le n)$-measurable random variables $\tes$ such that $\pr\{ \p_{\theta}\ell_{n}(\tes)=0 \} \to 0$ and that $\vp_{n}(\theta)^{-1}(\tes-\theta) \cil N_{3}(0,\mci(\theta)^{-1})$.
Suppose for a moment that we have an estimator $\tes^0$ such that
\begin{equation}
\vp_{n}(\theta)^{-1}(\tes^0 -\theta) = O_{p}(1).
\label{hm:theta0}
\end{equation}
Define the one-step MLE $\tes^1$ by
\begin{equation}
\tes^1 = \tes^0 + \left( \vp_{n}(\tes^0)^{-1 \top} \mci(\tes^0) \vp_{n}(\tes^0)^{-1} \right)^{-1} \p_{\theta}\ell_{n}(\tes^0),
\label{hm:1step.est}
\end{equation}
which is well-defined with probability tending to $1$.
Then, by means of Fisher's scoring together with making use of the properties \eqref{hm:sw-1}, \eqref{hm:C2i-1}, and \eqref{hm:C2ii-1}, we see that $\tes^1$ is asymptotically equivalent to the MLE, that is $\vp_{n}(\theta)^{-1}(\tes-\tes^1) \cip 0$.
By Theorem \ref{hm:mle_thm}, this implies that
\begin{equation}
\vp_{n}(\theta)^{-1}(\tes^1 - \theta) \cil N_{3}\left( 0,\, \mci(\theta)^{-1}\right).
\label{hm:asymp.efficiency}
\end{equation}
We remark that the likelihood function $\ell_{n}(\theta)$, its partial derivatives and the components of $\Sigma(\beta)$ (recall \eqref{hm:Sigma_def}) can be computed through numerical integration with high precision (see \cite{MatTak06} and the references therein).

\subsection{Initial estimator}
\label{hm:sec_optest-2}

We here show that the method of moments based on appropriate moment matchings with sample median adjustment provides us with an initial estimator $\tes^0$ satisfying \eqref{hm:theta0}. We will follow the same scenario as in \cite{Mas09jjss} (also \cite[Section 3.3]{Mas15LM}), as described below.

For brevity (yet without loss of generality) we set $n$ to be odd, say $n=2k+1$. Let
\begin{equation}
\hat{\mu}^{0}_{n} := \frac{1}{h}\D_{(k+1)}X,
\label{hm:smed_mu0}
\end{equation}
where $\D_{(1)}X \le \dots \le \D_{(n)}X$ denote the order statistics of $(\D_{j}X)_{j=1}^{n}$.
From theory of order statistics or that of the least absolute deviation estimation, we know that $\hat{\mu}^{0}_{n}$ is rate-efficient (\cite{Mas09jjss}, \cite{Mas15LM}):
\begin{equation}
\sqrt{n}h^{1-1/\beta}(\hat{\mu}^{0}_{n}-\mu) 
= -\frac{\sig}{2\phi_{\beta}(0)}\frac{1}{\sqrt{n}}\sumj\mathrm{sgn}\left(\ep_{j}\right) + o_{p}(1),
\label{hm:lad.se}
\end{equation}
its asymptotic distribution being $N\left(0,\, \sig^{2}/\{4\phi_{\beta}(0)^{2}\}\right)$.

Let $g=(g_{1},g_{2}): \mbbr\setminus\{0\} \to\mbbr^{2}$ be a function such that the following two conditions hold:
there exist a constant $\ep_{0}>0$ and a Lebesgue integrable function $\zeta$ such that
\begin{equation}
\sup_{|a|\le\ep_{0}}\left|\p \phi_{\beta}(y+a)\right|\le\frac{\zeta(y)}{1+|g(y)|^{3}},\qquad y\in\mbbr\setminus\{0\},
\label{hm:g_A1}
\end{equation}
and moreover
\begin{equation}
\int_{\mbbr\setminus\{0\}}|g(y)|^{3}\phi_{\beta}(y)dy<\infty.
\label{hm:g_A2}
\end{equation}
Then we consider
\begin{align}
\mbbg_{n}(\beta') &:= \frac{1}{2k}\sum_{j\ne k+1} g\left(h_{n}^{-1/\beta'}(\D_{(j)}X-\hat{\mu}^{0}_{n} h)\right), \nn\\
\mbbg_{0}(\beta',\sig') &
:= \int_{\mbbr\setminus\{0\}} g(y) \sig^{\prime -1}\phi_{\beta'}(\sig^{\prime -1}y)dy,
\nonumber
\end{align}
where we removed the index $j=k+1$ for the summation to subsume cases where $g(0)$ is not defined, such as the case $y\mapsto\log|y|$ considered later on.
Note that at this stage the elements of the vector $\mbbg_{0}(\beta',\sig')$ may be infinite, or even not well-defined, depending on specific value of $(\beta',\sig')$.
It will be convenient to introduce an auxiliary value $\beta_{g}\in[0,2)$ such that
\begin{equation}
\beta \in (\beta_{g} , 2)
\label{hm:add.eq2}
\end{equation}
(recall that we are fixing a true value $\theta=(\beta,\sig,\mu)$); this form is enough to cover the example $g_{q}$ in Section \ref{sec:sim},
where we will demonstrate that the restriction \eqref{hm:add.eq2} does not force us to narrow the parameter space of $\beta$ beforehand.
Under the conditions \eqref{hm:g_A1} and \eqref{hm:g_A2} we can apply the median-adjusted central limit theorem \cite[Theorem 3.7]{Mas15LM} for $\sqrt{n}(\mbbg_{n}(\beta) - \mbbg_{0}(\beta,\sig))$, hence in particular $\mbbg_{n}(\beta) \cip \mbbg_{0}(\beta,\sig)$.
Assume that the random equation
\begin{equation}
\mbbg_{n}(\beta') = \mbbg_{0}(\beta',\sig')
\label{hm:ME_random.eq}
\end{equation}
with respect to $(\beta',\sig') \in (\beta_{g},2)\times(0,\infty)$ has a unique solution $(\bes^0,\ses^0)$ with $\pr$-probability tending to $1$, and let
\begin{equation}
\tes^{0}:=(\bes^0,\ses^0,\hat{\mu}^{0}_{n}).
\nonumber
\end{equation}

Now, we claim that \eqref{hm:theta0} holds for any norming matrix $\vp_{n}(\theta)$ satisfying \eqref{hm:vp-conditions} (though we here do not require the uniformity in $\theta$), if the following conditions hold:
\begin{align}
& \text{$\mbbg_{n} \in \mcc^{1}\left((0,2)\right)$ a.s. 
and $\mbbg_{0} \in \mcc^{2}\left( (\beta_{g},2) \times(0,\infty)\right)$;}
\label{hm:me_A3}\\
& \left|\sqrt{n}(\bes^0 -\beta)\right| + \bigg|\frac{\sqrt{n}}{\log(1/h)}(\ses^0 -\sig)\bigg| = O_{p}(1);
\label{hm:me_A1}\\
& -\p_{\beta}\mbbg_{n}(\tilde{\beta}_{n}) = \frac{\sig}{\beta^{2}}\log(1/h) \p_{\sig}\mbbg_{0}(\beta,\sig) + o_{p}\bigg(\frac{1}{\log(1/h)}\bigg);
\label{hm:me_A2}\\
& \text{$\Gam_{0}(\beta,\sig):=\ds{\bigl( \p_{\beta}\mbbg_{0}(\beta,\sig),\, -\sig\p_{\sig}\mbbg_{0}(\beta,\sig) \bigr)}$ is non-singular},
\label{hm:me_A4}
\end{align}
where \eqref{hm:me_A2}, which comes from the rate-matrix condition \eqref{hm:vp-conditions}, should holds for any $\tilde{\beta}_{n}$ such that $\sqrt{n}(\tilde{\beta}_{n}-\beta)=O_{p}(1)$.
As a matter of fact, it will turn out that the distribution of $\vp_{n}(\theta)^{-1}(\tes^0 -\theta)$ is asymptotically (non-degenerate) normally distributed.

\medskip

Recall the notation $\rho=(\beta,\sig)^{\top}$ and that $\tilde{\vp}_{n}(\rho)$ denotes the upper-left $2\times 2$-submatrix of $\sqrt{n}\vp_{n}(\theta)$ (see the expression \eqref{hm:vp_def}). Write $\res^0=(\bes^0,\ses^0)^{\top}$ and $\mbbs_{n}(\rho)=\sqrt{n}( \mbbg_{n}(\beta) - \mbbg_{0}(\beta,\sig) )$.
To deduce the claim, we expand $\mbbs_{n}(\res^0)$ around $\rho$ on the event $\{\mbbs_{n}(\res^0)=0\}$:
\begin{align}
\bigg( -\frac{1}{\sqrt{n}}\p_{\rho}\mbbs_{n}(\tilde{\rho}_{n}) \tilde{\vp}_{n}(\rho) \bigg) \sqrt{n}\tilde{\vp}_{n}(\rho)^{-1}(\res^0-\rho) = \mbbs_{n}(\rho),
\nonumber
\end{align}
where $\tilde{\rho}_{n}=(\tilde{\beta}_{n},\tilde{\sig}_{n})^{\top}$ is a point on the segment joining $\res^0$ and $\rho$.
It follows from \eqref{hm:me_A3} and \eqref{hm:me_A1} that $\tilde{\beta}_{n}$ is $\sqrt{n}$-consistent and that
\begin{equation}
\p_{\rho}\mbbg_{0}(\tilde{\rho}_{n})=\p_{\rho}\mbbg_{0}(\rho)+O_{p}\bigg(\frac{\log(1/h)}{\sqrt{n}}\bigg)
=\p_{\rho}\mbbg_{0}(\rho) + o_{p}\bigg(\frac{1}{\log(1/h)}\bigg).
\nonumber
\end{equation}
Further, by \eqref{hm:vp-conditions} we have $|\tilde{\vp}_{n}(\rho)|=O(\log(1/h))$.
Building on these observations combined with \eqref{hm:me_A2} and direct computations based on \eqref{hm:vp-conditions}, we obtain the following chain of equalities between $2\times 2$ matrices:
\begin{align}
-\frac{1}{\sqrt{n}}\p_{\rho}\mbbs_{n}(\tilde{\rho}_{n}) \tilde{\vp}_{n}(\rho)
&= \bigl( -\p_{\beta}\mbbg_{n}(\tilde{\beta}_{n}) + \p_{\beta}\mbbg_{0}(\tilde{\rho}_{n}),
~\p_{\sig}\mbbg_{0}(\tilde{\rho}_{n}) \bigr) \tilde{\vp}_{n}(\rho) \nn\\
&= \biggl( \frac{\sig}{\beta^{2}}\log(1/h) \p_{\sig}\mbbg_{0}(\rho)  + \p_{\beta}\mbbg_{0}(\rho),~ \p_{\sig}\mbbg_{0}(\rho) \biggr) \tilde{\vp}_{n}(\rho) + o_{p}(1) \nn\\
&= \Gam_{0}(\rho)
\begin{pmatrix}
\overline{\vp}_{11} & \overline{\vp}_{12} \\
-\overline{\vp}_{21} & -\overline{\vp}_{22}
\end{pmatrix} + o_{p}(1).
\nonumber
\end{align}
This clearly shows that the condition \eqref{hm:me_A2}, which may seem unusual at first glance, combined with \eqref{hm:vp-conditions} is quite natural in our framework.
By \cite[Theorem 3.7]{Mas15LM}, the random variable $\mbbs_{n}(\rho)$ is asymptotically normally distributed with non-degenerate asymptotic covariance matrix.
Piecing together what we have seen and the invertibility condition \eqref{hm:me_A4} gives
\begin{align}
\sqrt{n}\tilde{\vp}_{n}(\rho)^{-1}(\res^0 - \rho) 
=\begin{pmatrix}
\overline{\vp}_{11} & \overline{\vp}_{12} \\
-\overline{\vp}_{21} & -\overline{\vp}_{22}
\end{pmatrix}^{-1}
\Gam_{0}(\rho)^{-1}\mbbs_{n}(\rho)+o_{p}(1).
\label{hm:rho.se}
\end{align}
The tightness \eqref{hm:theta0} now follows from the stochastic expansions \eqref{hm:lad.se} and \eqref{hm:rho.se} together with the form \eqref{hm:vp_def} of $\vp_{n}(\theta)$.

\subsection{Asymptotic efficency}

What we have seen in Sections \ref{hm:sec_optest-1} and \ref{hm:sec_optest-2} concludes the following theorem.

\begin{thm}
Assume that the conditions \eqref{hm:g_A1} and \eqref{hm:g_A2}, and \eqref{hm:me_A3} to \eqref{hm:me_A4} hold.
Then, the random equation \eqref{hm:ME_random.eq} has a unique solution $(\bes^0,\ses^0)$ with $\pr$-probability tending to $1$, and we have \eqref{hm:theta0} for any norming matrix $\vp_{n}(\theta)$ satisfying \eqref{hm:vp-conditions} (without the uniformity in $\theta$).
Moreover, the estimator $\tes^1$ defined by \eqref{hm:1step.est} is asymptotically efficient in the sense that the asymptotic normality \eqref{hm:asymp.efficiency} holds.
\label{hm:thm_me}
\end{thm}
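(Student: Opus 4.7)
The three claims of Theorem \ref{hm:thm_me} can be extracted directly from the reasoning already developed in Sections \ref{hm:sec_optest-1} and \ref{hm:sec_optest-2}; my plan is simply to organize that material into a coherent proof, adding the small amount of extra work needed for existence and for the scoring step.

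For the existence and uniqueness of a solution $(\bes^0,\ses^0)$ to \eqref{hm:ME_random.eq} with $\pr$-probability tending to $1$, I would apply the implicit function theorem to the random map $\Phi_n(\rho'):=\mbbg_n(\beta')-\mbbg_0(\beta',\sig')$ in a shrinking neighborhood of the true $\rho=(\beta,\sig)^\top$. Under \eqref{hm:g_A1}--\eqref{hm:g_A2}, the median-adjusted central limit theorem \cite[Theorem 3.7]{Mas15LM} gives $\mbbg_n(\beta)\cip\mbbg_0(\beta,\sig)$, so $\Phi_n(\rho)\cip 0$; by \eqref{hm:me_A3} the map is $\mcc^1$ in $\rho'$; and by \eqref{hm:me_A2} together with \eqref{hm:me_A4}, the Jacobian $\p_{\rho'}\Phi_n$ evaluated near $\rho$ is, up to a vanishing error of order $1/\log(1/h)$, a non-singular deterministic matrix proportional to $\Gam_0(\beta,\sig)$. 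A standard implicit function argument then yields a unique root in any fixed small neighborhood of $\rho$, on an event of probability tending to $1$.

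Next I would verify \eqref{hm:theta0}. The third component of $\tes^0$ is $\hat{\mu}_n^{0}$, for which \eqref{hm:lad.se} gives a CLT at rate $\sqrt{n}h^{1-1/\beta}$, which matches the $(3,3)$-entry of $\vp_n(\theta)^{-1}$. For the remaining pair $\res^0=(\bes^0,\ses^0)^\top$, I would recall the expansion derived in Section \ref{hm:sec_optest-2}: Taylor-expanding $\mbbs_n(\res^0)=0$ around $\rho$, invoking \eqref{hm:me_A3}, \eqref{hm:me_A1}, and the rate-stabilization identity \eqref{hm:me_A2} to absorb the otherwise-dominant $\log(1/h)$ term, then using \eqref{hm:me_A4} to invert, yields \eqref{hm:rho.se}. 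Since the right-hand side of \eqref{hm:rho.se} is a product of a bounded matrix with the asymptotically normal quantity $\mbbs_n(\rho)$, it is $O_p(1)$. Combining with the block-diagonal form \eqref{hm:vp_def} of $\vp_n(\theta)$ delivers \eqref{hm:theta0}.

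Finally I would deduce the asymptotic efficiency of $\tes^1$. The update \eqref{hm:1step.est} is the classical Fisher-scoring step with the theoretical Fisher information evaluated at $\tes^0$. The plan is to Taylor-expand $\p_\theta\ell_n(\tes^0)$ around the MLE $\tes$ (for which $\p_\theta\ell_n(\tes)=0$ on an event of probability tending to $1$ by Theorem \ref{hm:mle_thm}), writing
\begin{equation}
\p_\theta\ell_n(\tes^0) = -\p_\theta^2\ell_n(\overline{\theta}_n)(\tes^0-\tes),
\nonumber
\end{equation}
with $\overline{\theta}_n$ on the segment joining $\tes^0$ and $\tes$. Since both $\tes^0-\theta$ and $\tes-\theta$ lie in $\vp_n(\theta)$-neighborhoods of $\theta$ of bounded $\vp_n(\theta)^{-1}$-radius, the Sweeting-type conditions \eqref{hm:C2i-1}, \eqref{hm:C2ii-1} together with \eqref{hm:sw-1} and the continuity of $\mci(\cdot)$ let me replace $-\vp_n(\tes^0)^\top\p_\theta^2\ell_n(\overline{\theta}_n)\vp_n(\tes^0)$ by $\mci(\theta)$ up to $o_p(1)$. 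Substituting and inverting then gives $\vp_n(\theta)^{-1}(\tes^1-\tes)\cip 0$, from which \eqref{hm:asymp.efficiency} follows by Theorem \ref{hm:mle_thm}(2).

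The main obstacle is the last paragraph: because $\vp_n(\theta)$ is non-diagonal and has different asymptotic orders in its three directions, the transfer of norming and information matrices from $\theta$ to $\tes^0$ must be done carefully along the shrinking neighborhood $\mathfrak{N}_n(c;\theta)$, relying on both \eqref{hm:C2i-1} and \eqref{hm:C2ii-1} in an essential way. The first two steps, in contrast, are essentially bookkeeping on top of what Section \ref{hm:sec_optest-2} already provides.
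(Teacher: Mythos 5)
Your handling of the tightness claim \eqref{hm:theta0} and of the scoring step is essentially the paper's own argument: the proof of Theorem \ref{hm:thm_me} in the paper is nothing more than the material of Sections \ref{hm:sec_optest-1} and \ref{hm:sec_optest-2} (the expansion of $\mbbs_{n}(\res^0)=0$ using \eqref{hm:me_A3}--\eqref{hm:me_A4} to get \eqref{hm:rho.se}, combined with \eqref{hm:lad.se} and the block form \eqref{hm:vp_def}, plus the standard Fisher-scoring expansion relying on \eqref{hm:sw-1}, \eqref{hm:C2i-1}, \eqref{hm:C2ii-1}), and you reproduce exactly that; your only imprecision there is writing the mean-value expansion of $\p_\theta\ell_n(\tes^0)$ around $\tes$ with a single intermediate point, which for a vector-valued gradient should be done row-wise --- this is precisely why \eqref{hm:C2ii-1} is formulated with three points $\theta^1,\theta^2,\theta^3$, so the fix is immediate.

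The genuine problem is your first step. First, note that the paper does not actually prove existence and uniqueness of the root of \eqref{hm:ME_random.eq} from \eqref{hm:me_A3}--\eqref{hm:me_A4}; in Section \ref{hm:sec_optest-2} unique solvability is introduced as an assumption, and for the concrete choices $g_{\log}$, $g_{q}$ it is settled in Section \ref{sec:sim} by explicit solution or monotone bisection. Second, your implicit-function-theorem sketch is wrong as stated: the Jacobian $\p_{\rho'}\Phi_n(\rho')=\bigl(\p_\beta\mbbg_n(\beta')-\p_\beta\mbbg_0(\rho'),\,-\p_\sig\mbbg_0(\rho')\bigr)$ is \emph{not} within $o_p(1/\log(1/h))$ of a fixed non-singular matrix proportional to $\Gam_0(\beta,\sig)$. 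By \eqref{hm:me_A2} its $\beta'$-column diverges like $\sig\beta^{-2}\log(1/h)\,\p_\sig\mbbg_0(\beta,\sig)$, i.e.\ it blows up and becomes asymptotically parallel to the $\sig'$-column --- this is exactly the degeneracy that motivates the non-diagonal norming of the whole paper. Local invertibility can still be salvaged, but only by exploiting the cancellation: either observe that the divergent rank-one part is parallel to $\p_\sig\mbbg_0$ and hence drops out of the determinant, leaving $\det\p_{\rho'}\Phi_n$ of order one and non-vanishing by \eqref{hm:me_A4}, or, as the paper does, post-multiply by $\tilde{\vp}_{n}(\rho)$ so that $-n^{-1/2}\p_{\rho}\mbbs_{n}(\tilde\rho_n)\tilde{\vp}_{n}(\rho)\cip\Gam_0(\rho)\bigl(\begin{smallmatrix}\overline{\vp}_{11} & \overline{\vp}_{12} \\ -\overline{\vp}_{21} & -\overline{\vp}_{22}\end{smallmatrix}\bigr)$. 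Moreover, such a local argument yields at best a unique root in a shrinking neighborhood of $\rho$; uniqueness over all of $(\beta_g,2)\times(0,\infty)$, as claimed in the theorem, does not follow from it and in the paper rests on the structure of the specific estimating functions.
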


\medskip

Theorem \ref{hm:thm_me} remedies the incorrect uses of the delta method for the asymptotic normality results given in Sections 3.3 and 3.4 of \cite{Mas15LM}, together with the relevant references cited therein.


\subsection{Examples and simulations}\label{sec:sim}

Here we discuss the  two specific examples of $g$ treated in \cite{Mas15LM}:
\begin{equation}
\text{$g_{q}(y):=(|y|^{q}, |y|^{2q})^{\top}$ \quad and \quad $g_{\log}(y):=(\log|y|, \, (\log|y|)^{2})^{\top}$.}
\nonumber
\end{equation}
The choice $g_{q}$ is closely related to Todorov's result \cite{Tod13}, where he derived a stable central limit theorem associated with the power-variation statistics for a class of pure-jump It\^o semimartingales.

It suffices to set $\beta_{g}=0$ and $\beta_{g}=6q$ for $g_{\log}$ and $g_{q}$, respectively; note that the latter choice comes from the condition \eqref{hm:g_A2}.
Since $q>0$ should be given {\it a priori}, this implies that we are forced to pick a $q>0$ such that
\begin{equation}
q < \frac{\beta}{6}
\label{hm:add.eq1}
\end{equation}
with the true value $\beta\in(0,2)$ being unknown;
of course, this problem does not appear when, for example, we assume from the very beginning that $X$ has a finite mean, so that we may set $\beta_{g}=1$ and any $q<1/6$ can be used.
In Section \ref{hm.sec_verification.1} we will first verify the aforementioned conditions \eqref{hm:me_A3} to \eqref{hm:me_A4} for both of $g_{q}$ and $g_{\log}$ in parallel, with supposing that the tuning parameter $q$ satisfies \eqref{hm:add.eq1} for $g_{q}$.
Then in Section \ref{hm.sec_verification.2} we will show that
by introducing an auxiliary estimator of $\beta$ through splitting data we can proceed as if the condition \eqref{hm:add.eq1} does hold without any prior knowledge about $\beta\in(0,2)$.
\footnote{We are grateful for the anonymous reviewer for suggestion on this point.}

\subsubsection{Verification with knowing \eqref{hm:add.eq1} for $g_{q}$}
\label{hm.sec_verification.1}

The estimating equation \eqref{hm:ME_random.eq} can be successfully solved with respect to $\beta$,
and appropriate delta methods leads to the asymptotic normality of $\sqrt{n}(\bes^0-\beta)$; see \cite[Sections 3.3.5 and 3.3.6]{Mas15LM} for details.
The next step, estimation of $\sig$, is of primary importance in the present study.
Plugging $\bes^0$ into the first components of the original estimating equations (corresponding to $\log|y|$ and $|y|^{q}$ respectively for $g_{\log}$ and $g_{q}$),
and then again using the tightness of $\{\mbbs_{n}(\beta,\sig)\}_{n}$, we deduce the following stochastic expansions:
\begin{align}
\frac{\sqrt{n}}{\log(1/h)}( \log\ses^0 - \log\sig ) &= \beta^{-2}\sqrt{n}(\bes^0 -\beta ) + O_{p}\bigg(\frac{1}{\log(1/h)}\bigg), \nn\\
\frac{\sqrt{n}}{\log(1/h)}\{ (\ses^0)^{q} - \sig^{q} \} &= -q\beta^{-2}\sig^{q}\sqrt{n}(\bes^0 -\beta ) + O_{p}\bigg(\frac{1}{\log(1/h)}\bigg),
\nonumber
\end{align}
for the cases of $g_{\log}$ and $g_{q}$, respectively;
these expansions clarify asymptotic linear dependence of $\bes^{0}$ and $\ses^0$, which is quite similar to \cite[Eq.(3.76)]{Mas10proc} in the context of moment estimation of a skewed stable model without drift.
Thus, the tightness condition \eqref{hm:me_A1} holds in both cases.

We know that
\begin{align}
\mbbg_{0}(\rho) &=
\left(
\begin{array}{c}
\mathfrak{C}(1/\beta-1)+\log\sig \\
\pi^{2}(1/\beta^{2}+1/2)/6 + \{\mathfrak{C}(1/\beta-1)+\log\sig\}^{2}
\end{array}
\right),
\nn\\
\mbbg_{0}(\rho) &= 
\left(
\begin{array}{c}
\sig^{q}C(\beta,q) \\
\sig^{2q}C(\beta,2q)
\end{array}
\right)
\nonumber
\end{align}
for the cases of $g_{\log}$ and $g_{q}$, respectively, where
\begin{equation}
C(\beta,q):=C_{q}\Gam\bigg(1-\frac{q}{\beta}\bigg)
\nonumber
\end{equation}
with $C_{q}:=2^{q}\pi^{-1/2}\Gam\left((q+1)/2\right)/\Gam(1-q/2)$: the differentiability \eqref{hm:me_A3} is trivial.
Solving the random equation $\mbbg_{n}(\beta') = \mbbg_{0}(\beta',\sig')$ is numerically easy and fast:
we have the explicit solution in the $g_{\log}$ case, and also we can apply the simple bisection search for the $g_{q}$ case.

For any random sequence $\tilde{\beta}_{n}$, straightforward computations lead to the following:
\begin{align}
& -\p_{\beta}\mbbg_{n}(\tilde{\beta}_{n}) \nn\\
&= \left(
\begin{array}{c}
\ds{\frac{1}{\tilde{\beta}_{n}^{2}}\log(1/h)} \\
\ds{\frac{2}{\tilde{\beta}_{n}^{2}}\log(1/h)\frac{1}{2k}\sum_{j\ne k+1}\log\big| h^{-1/\beta}(\D_{j}X-\hat{\mu}^{0}_{n}h ) \big| 
+ \frac{2}{\tilde{\beta}_{n}^{2}}\{\log(1/h)\}^{2}
\bigg(\frac{1}{\tilde{\beta}_{n}}-\frac{1}{\beta}\bigg)}
\end{array}
\right) \label{hm:me-A2-1}
\end{align}
for the $g_{\log}$ case, and also
\begin{align}
-\p_{\beta}\mbbg_{n}(\tilde{\beta}_{n}) &= 
-\frac{q}{\tilde{\beta}_{n}^{2}}\log(1/h) \left(
\begin{array}{cc}
h^{q(1/\beta-1/\tilde{\beta}_{n})} & 0 \\
0 & 2h^{2q(1/\beta-1/\tilde{\beta}_{n})}
\end{array}
\right) \mbbg_{n}(\beta)
\label{hm:me-A2-2}
\end{align}
for the $g_{q}$ case.
Let $\tilde{\beta}_{n}$ be a $\sqrt{n}$-consistent estimator of $\beta$, and recall that $\mbbg_{n}(\beta)=\mbbg_{0}(\beta,\sig)+O_{p}(1/\sqrt{n})$.
Then, the right-hand sides of \eqref{hm:me-A2-1} and \eqref{hm:me-A2-2} both equal $\sig\beta^{-2}\log(1/h) \p_{\sig}\mbbg_{0}(\beta,\sig) + o_{p}(1)$, hence the condition \eqref{hm:me_A2}.

Finally, the non-degeneracy condition \eqref{hm:me_A4} can be verified by direct computations as in \cite{Mas15LM}:
indeed, the determinant $|\Gam_{0}(\rho)|$ equals $-\pi^{2}/(3\beta^{3})$ for the $g_{\log}$ case,
and $2q^{2}\beta^{-2}\sig^{3q}C_{q}C_{2q}\{\p\Gam(1-2q/\beta)-\p\Gam(1-q/\beta)\}$ for the $g_{q}$ case, both non-vanishing.

Thus we have verified the conditions \eqref{hm:me_A3} to \eqref{hm:me_A4} for $g_{\log}$, and also for $g_{q}$ when we know \eqref{hm:add.eq1} beforehand.

\subsubsection{Verification without knowing \eqref{hm:add.eq1}}
\label{hm.sec_verification.2}


Now let us consider $g_{q}$ when we do not know if \eqref{hm:add.eq1} holds or not.
Pick an increasing sequence $(m_{n})\subset\mbbn$ such that
\begin{equation}
m=m_{n}\to\infty \quad \text{and} \quad m=o(n), \qquad n\to\infty.
\nonumber
\end{equation}
Let $\hat{\beta}_{\log,m}$ denote the $g_{\log}$-moment estimator of $\beta$ constructed only from the first $m$ increments $\D_{1}X,\dots,\D_{m}X$.
As was seen in Section \ref{hm.sec_verification.1} we have $\hat{\beta}_{\log,m} \cip \beta$.

Let $\ep\in(0,1)$ be a (small) number, and set
\begin{equation}
\hat{q}_{m} = \frac{1}{6}(1-\ep)\hat{\beta}_{\log,m}.
\nonumber
\end{equation}
Then we observe that
\begin{align}
\pr\bigg( \hat{q}_{m} < \frac{\beta}{6} \bigg) \ge \pr\bigg( \big|\hat{\beta}_{\log,m}-\beta\big| < \frac{\ep\beta}{1-\ep} \bigg) \to 1.
\label{hm:add.eq3}
\end{align}
Now, denote by $\hat{\mu}^{\perp}_{n-m}$ the sample-median based estimator of $\mu$ defined as in \eqref{hm:smed_mu0} except that it is now constructed from $\D_{m+1}X,\dots,\D_{n}X$;
here again, we may set $n-m$ odd.
Also, denote by $(\hat{\beta}^{\perp}_{n-m}(q),\, \hat{\sig}^{\perp}_{n-m}(q))$ the $(q,2q)$th-moment estimator of $(\beta,\sig)$ defined as in Section \ref{hm.sec_verification.1} except that it is computed based on $\D_{m+1}X,\dots,\D_{n}X$ with plugging-in $\hat{\mu}^{\perp}_{n-m}$. Then, we define
\begin{equation}
\hat{\theta}^{\perp}_{n-m} := \left( \hat{\beta}^{\perp}_{n-m}(\hat{q}_{m}),\, \hat{\sig}^{\perp}_{n-m}(\hat{q}_{m}),\, \hat{\mu}^{\perp}_{n-m}\right).
\nonumber
\end{equation}
By its construction, $\hat{q}_{m}$ is independent of $(\D_{m+1}X,\dots,\D_{n}X)$.
Further, since $(\D_{m+1}X,\dots,\D_{n}X)$ and $(\D_{1}X,\dots,\D_{n-m}X)$ have the same distribution, the distribution of the estimator $\hat{\theta}^{\perp}_{n-m}$ conditional on the event $\{\hat{q}_{m}<\beta/6\}$ is the same as the one of $\hat{\theta}^{\perp}_{n-m}$ based on $(\D_{1}X,\dots,\D_{n-m}X)$ where we can regard $(\hat{q}_{m})$ as a constant sequence converging to $(1-\ep)\beta/6$ as $n\to\infty$.
Hence, under the additional condition
\begin{equation}
\left| \vp_{n-m}^{-1}(\theta)\,\vp_{n}(\theta) - I_{3}\right| \to 0,
\label{hm:add.eq4}
\end{equation}
it follows from \eqref{hm:add.eq3} that
\begin{equation}
\vp_{n}(\theta)^{-1}(\hat{\theta}^{\perp}_{n-m} -\theta) = O_{p}(1).
\nn
\end{equation}
That is, $\hat{\theta}^{\perp}_{n-m}$ as an initial estimator meets the condition \eqref{hm:theta0}.
This together with Theorem \ref{hm:thm_me} and what we have seen in Section \ref{hm.sec_verification.1} concludes that
the estimator $\tes^1$ given by \eqref{hm:1step.est} with $\tes^0 = \hat{\theta}^{\perp}_{n-m}$ is asymptotically efficient.
Thus, we have shown that under the condition \eqref{hm:add.eq4}, the $(q,2q)$th-moment estimator can be used as if \eqref{hm:add.eq1} is true without any previous knowledge of $\beta\in(0,2)$;
in practice, we first compute $\hat{\beta}_{\log,m}$ and then fix any $\hat{q}_{m} < \hat{\beta}_{\log,m}/6$, followed by $\hat{\theta}^{\perp}_{n-m}$ for that $\hat{q}_{m}$.

Finally, we note that the condition \eqref{hm:add.eq4} holds for both \eqref{phibeta} and \eqref{phis} as soon as
\begin{equation}
\frac{h_{n-m}}{h_{n}} \to 1 \quad \text{and} \quad \frac{\log h_{n}}{\log h_{n-m}} \to 1.
\nonumber
\end{equation}
These conditions and \eqref{hm:liminf.p} hold if, for example, $h_{n}=n^{-\kappa}$ for $\kappa\in(0,1]$.


\subsubsection{Simulation}

Finally we present a simulation result.
We set $\mu=0$ for brevity and focused on comparing performances of joint estimation of $(\beta,\sig)$ through the MLE, the one-step MLE, and the $(q,2q)$th-moment estimator with $q=0.1$;
for simplicity, we implicitly assume that \eqref{hm:add.eq1} holds, that is, $\beta>0.6$.
We set $h=1/n$ and simulated $1,000$ Monte-Carlo samples of $n=2^9=512$ i.i.d. symmetric $\beta$-stable random variables
$\D_{j}X$ with index $\beta=1.6$ and scale $\sig h^{1/\beta}$ for $\sig=1.2$.
The $(q,2q)$th-moment estimator was also used as the initial estimator in the one-step MLE \eqref{hm:1step.est} with rate matrix being the upper-left $2\times 2$-submatrix of \eqref{phibeta}.

Figure \ref{fig:SaSHF} shows the histograms based on the normalized statistical errors
\begin{equation}
\text{$\sqrt{n} ( \hat{\beta}_n - \beta )$ \quad and \quad 
$ \frac{\sqrt{n}}{ \beta^{-2} \sigma \log (1/ h) } ( \hat{\sigma}_n - \sigma )$}
\nonumber
\end{equation}
on the left and the right, respectively, for $(\bes,\ses)$ being the simulated sequences of the moment estimators (ME, on the top), the one-step MLEs (in the center), and the MLEs (on the bottom).
In each panel, the solid line represents the asymptotic normal distribution with efficient variance
$\Sigma_{22}(\Sigma_{11}\Sigma_{22}-\Sigma_{12}^{2})^{-1}$; see \eqref{hm:lb-beta} and \eqref{hm:lb-sigma}.
The implementations of the likelihood, the score and the Fisher information for computing the sequences of the MLE and the one-step MLE are based on \cite{MatTak06}.
From Figure \ref{fig:SaSHF}, we can observe that the histograms of the moment estimators are far from the efficient asymptotic normal distributions, whereas both of the one-step MLE and the MLE sequences show much better performances.

\begin{figure}[h]
\centering
\includegraphics[
scale=0.7
]{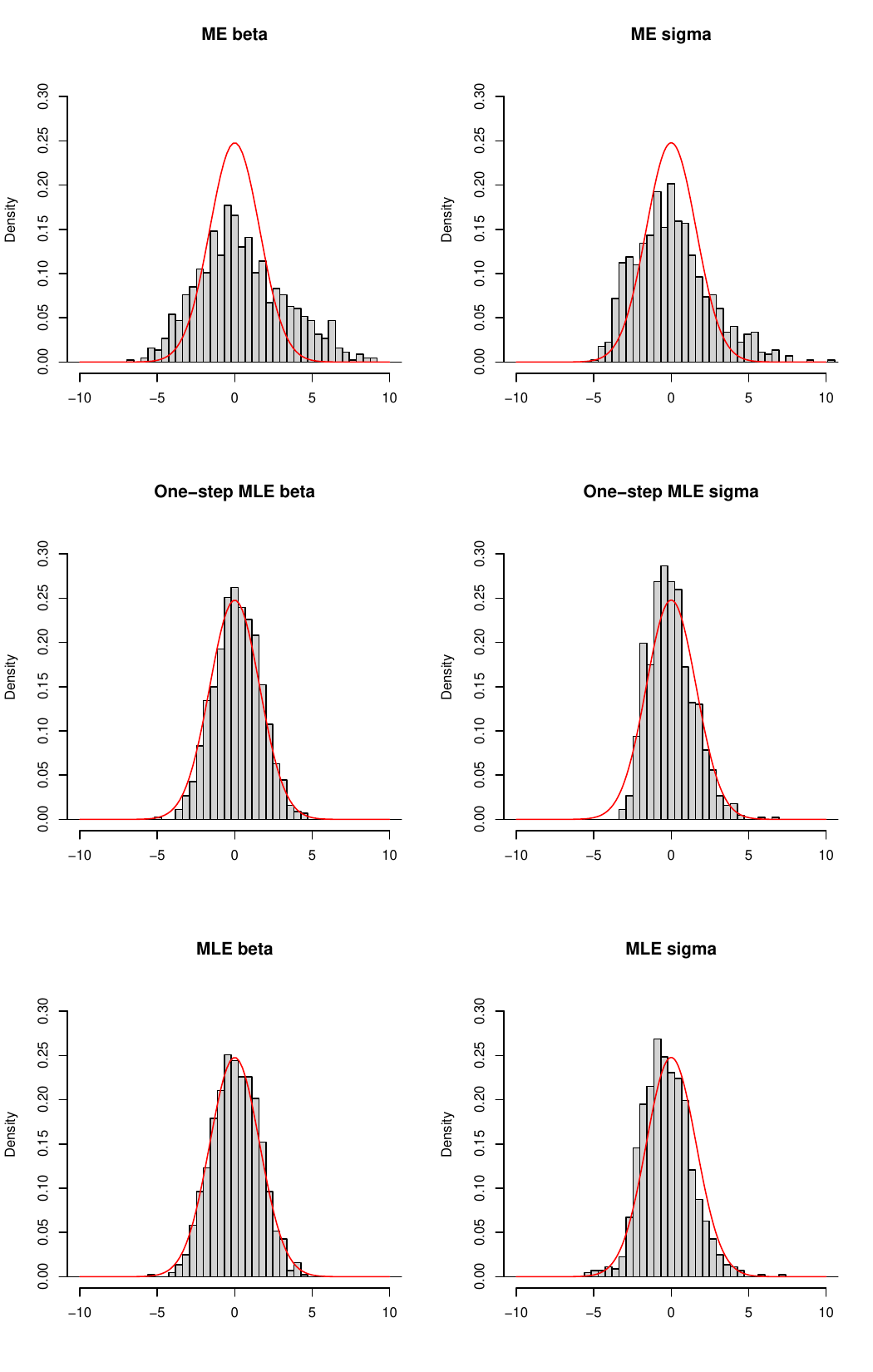}
\caption{Histograms of the MLE, one-step MLE, and the $(q,2q)$th-moment estimator (ME) with $q=0.1$ for $(\beta,\sig)$, based on $1,000$ independent simulated paths of sample size $n=2^{9}=512$.
The solid lines represent the asymptotic normal distribution with the efficient variance specified in \eqref{hm:lb-beta} and \eqref{hm:lb-sigma}.}
\label{fig:SaSHF}
\end{figure}

\bigskip

\noindent
\textbf{Acknowledgement.}
The authors also thank the reviewers and the associated editor for their valuable comments, which in particular led to substantial improvements of the arguments in Sections \ref{hm:sec_optest-2} and \ref{sec:sim}.
HM especially thanks Professor Jean Jacod for letting him notice the mistake in \cite{Mas09jjss}, which has been fixed in the present paper.

\bigskip

\begin{thebibliography}{10}

\bibitem{AitJac08}
Y.~A{\"{\i}}t-Sahalia and J.~Jacod.
\newblock Fisher's information for discretely sampled {L}\'evy processes.
\newblock {\em Econometrica}, 76(4):727--761, 2008.

\bibitem{BroFuk16}
A.~Brouste and M.~Fukasawa.
\newblock Local asymptotic normality property for fractional gaussian noise
  under high-frequency observations.
\newblock {\em arXiv preprint arXiv:1610.03694}, To appear in Ann. Statist.

\bibitem{CleGlo15}
E.~Cl{\'e}ment and A.~Gloter.
\newblock Local asymptotic mixed normality property for discretely observed
  stochastic differential equations driven by stable {L}\'evy processes.
\newblock {\em Stochastic Process. Appl.}, 125(6):2316--2352, 2015.

\bibitem{CohGamLacLou13}
S.~Cohen, F.~Gamboa, C.~Lacaux, and J.-M. Loubes.
\newblock L{AN} property for some fractional type {B}rownian motion.
\newblock {\em ALEA Lat. Am. J. Probab. Math. Stat.}, 10(1):91--106, 2013.

\bibitem{DuM73}
W.~H. DuMouchel.
\newblock On the asymptotic normality of the maximum-likelihood estimate when
  sampling from a stable distribution.
\newblock {\em Ann. Statist.}, 1:948--957, 1973.

\bibitem{GloJac01-1}
A.~Gloter and J.~Jacod.
\newblock Diffusions with measurement errors. {I}. {L}ocal asymptotic
  normality.
\newblock {\em ESAIM Probab. Statist.}, 5:225--242, 2001.

\bibitem{GloJac01-2}
A.~Gloter and J.~Jacod.
\newblock Diffusions with measurement errors. {II}. {O}ptimal estimators.
\newblock {\em ESAIM Probab. Statist.}, 5:243--260, 2001.

\bibitem{Gob01}
E.~Gobet.
\newblock Local asymptotic mixed normality property for elliptic diffusion: a
  {M}alliavin calculus approach.
\newblock {\em Bernoulli}, 7(6):899--912, 2001.

\bibitem{Gob02}
E.~Gobet.
\newblock L{AN} property for ergodic diffusions with discrete observations.
\newblock {\em Ann. Inst. H. Poincar\'e Probab. Statist.}, 38(5):711--737,
  2002.

\bibitem{Haj72}
J.~H{\'a}jek.
\newblock Local asymptotic minimax and admissibility in estimation.
\newblock In {\em Proceedings of the {S}ixth {B}erkeley {S}ymposium on
  {M}athematical {S}tatistics and {P}robability ({U}niv. {C}alifornia,
  {B}erkeley, {C}alif., 1970/1971), {V}ol. {I}: {T}heory of statistics}, pages
  175--194, Berkeley, Calif., 1972. Univ. California Press.

\bibitem{HopJac94}
R.~H\"opfner and J.~Jacod.
\newblock Some remarks on the joint estimation of the index and the scale
  parameter for stable processes.
\newblock In {\em Asymptotic statistics ({P}rague, 1993)}, Contrib. Statist.,
  pages 273--284. Physica, Heidelberg, 1994.

\bibitem{IbrHas81}
I.~A. Ibragimov and R.~Z. Has{\cprime}minski{\u\i}.
\newblock {\em Statistical estimation}, volume~16 of {\em Applications of
  Mathematics}.
\newblock Springer-Verlag, New York, 1981.
\newblock Asymptotic theory, Translated from the Russian by Samuel Kotz.

\bibitem{IvaKulMas15}
D.~Ivanenko, A.~M. Kulik, and H.~Masuda.
\newblock Uniform lan property of locally stable l\'{e}vy process observed at
  high frequency.
\newblock {\em ALEA Lat. Am. J. Probab. Math. Stat.}, 12(2):835--862, 2015.

\bibitem{KawMas13}
R.~Kawai and H.~Masuda.
\newblock Local asymptotic normality for normal inverse {G}aussian {L}\'evy
  processes with high-frequency sampling.
\newblock {\em ESAIM Probab. Stat.}, 17:13--32, 2013.

\bibitem{LeC72}
L.~Le~Cam.
\newblock Limits of experiments.
\newblock In {\em Proceedings of the {S}ixth {B}erkeley {S}ymposium on
  {M}athematical {S}tatistics and {P}robability ({U}niv. {C}alifornia,
  {B}erkeley, {C}alif., 1970/1971), {V}ol. {I}: {T}heory of statistics}, pages
  245--261. Univ. California Press, Berkeley, Calif., 1972.

\bibitem{Mas09jjss}
H.~Masuda.
\newblock Joint estimation of discretely observed stable {L}\'evy processes
  with symmetric {L}\'evy density.
\newblock {\em J. Japan Statist. Soc.}, 39(1):49--75, 2009.

\bibitem{Mas10proc}
H.~Masuda.
\newblock On statistical aspects in calibrating a geometric skewed stable asset
  price model.
\newblock In {\em Recent Advances in Financial Engineering 2009: Proceedings of
  the KIER-TMU International Workshop on Financial Engineering 2009}, pages
  181--202, 2010.

\bibitem{Mas15LM}
H.~Masuda.
\newblock Parametric estimation of {L}\'evy processes.
\newblock In {\em L\'evy matters. {IV}}, volume 2128 of {\em Lecture Notes in
  Math.}, pages 179--286. Springer, Cham, 2015.
\newblock Edited by Ole E. Barndorff-Nielsen, Jean Bertoin, Jean Jacod, and
  Claudia K\"{u}ppelberg.

\bibitem{MatTak06}
M.~Matsui and A.~Takemura.
\newblock Some improvements in numerical evaluation of symmetric stable density
  and its derivatives.
\newblock {\em Comm. Statist. Theory Methods}, 35(1-3):149--172, 2006.

\bibitem{Rei11}
M.~Rei\ss.
\newblock Asymptotic equivalence for inference on the volatility from noisy
  observations.
\newblock {\em Ann. Statist.}, 39(2):772--802, 2011.

\bibitem{stable5.3}
{Robust Analysis, Inc.}
\newblock User manual for stable 5.3 {R} version.
\newblock 2017.
\newblock Available at http://www.robustanalysis.com [Accessed on November 1,
  2017].

\bibitem{Rou72}
G.~G. Roussas.
\newblock {\em Contiguity of probability measures: some applications in
  statistics}.
\newblock Cambridge University Press, London-New York, 1972.
\newblock Cambridge Tracts in Mathematics and Mathematical Physics, No. 63.

\bibitem{Sat99}
K.-i. Sato.
\newblock {\em L\'evy processes and infinitely divisible distributions},
  volume~68 of {\em Cambridge Studies in Advanced Mathematics}.
\newblock Cambridge University Press, Cambridge, 1999.
\newblock Translated from the 1990 Japanese original, Revised by the author.

\bibitem{Swe80}
T.~J. Sweeting.
\newblock Uniform asymptotic normality of the maximum likelihood estimator.
\newblock {\em Ann. Statist.}, 8(6):1375--1381, 1980.
\newblock Corrections: (1982) {\it Annals of Statistics} {\bf 10}, 320.

\bibitem{Tod13}
V.~Todorov.
\newblock Power variation from second order differences for pure jump
  semimartingales.
\newblock {\em Stochastic Process. Appl.}, 123(7):2829--2850, 2013.

\bibitem{Zol86}
V.~M. Zolotarev.
\newblock {\em One-dimensional stable distributions}, volume~65 of {\em
  Translations of Mathematical Monographs}.
\newblock American Mathematical Society, Providence, RI, 1986.
\newblock Translated from the Russian by H. H. McFaden, Translation edited by
  Ben Silver.

\end{thebibliography}

\def\cprime{$'$} \def\polhk#1{\setbox0=\hbox{#1}{\ooalign{\hidewidth
  \lower1.5ex\hbox{`}\hidewidth\crcr\unhbox0}}} \def\cprime{$'$}
  \def\cprime{$'$}

\end{document}